\theoremstyle{plain}
    \newtheorem{theorem}{Theorem}[section]
   \newtheorem{proposition}[theorem]{Proposition}
    \newtheorem{lemma}[theorem]{Lemma}
    \newtheorem{corollary}[theorem]{Corollary}
    \newtheorem{conjecture}{Conjecture}
\theoremstyle{definition}
    \newtheorem{definition}[theorem]{Definition}
    \newtheorem*{notation}{Notation}
    \newtheorem{example}[theorem]{Example}
    \newtheorem{remark}[theorem]{Remark}
\newcommand{\bbR}{\mathbb{R}}
\DeclareMathOperator{\Gal}{Gal}
\DeclareMathOperator{\Hom}{Hom}
\DeclareMathOperator{\N}{N}
\DeclareMathOperator{\im}{im}
\newcommand{\bbC}{\mathbb{C}}
\newcommand{\id}{\mathrm{id}}
\newcommand{\bbZ}{\mathbb{Z}}
\newcommand{\bbQ}{\mathbb{Q}}
\newcommand{\ord}{\mathrm{ord}}
\newcommand{\sgn}{\mathrm{sgn}}
\title{On a conjecture for Rubin-Stark elements in a special case}
\author{Takamichi Sano}
\address{Department of Mathematics, 
Keio University, 
3-14-1, Hiyoshi, Kohoku-ku, Yokohama, 223-8522, Japan}
\email{tkmc310@a2.keio.jp}
\thanks{The author is supported by Grant-in-Aid for JSPS Fellows.}
\begin{document}

\maketitle

\begin{abstract}
We prove a conjecture on Rubin-Stark elements, which was recently proposed by the author, and also by Mazur and Rubin, in a special case. 
\end{abstract}

\section{Introduction}
In \cite[Conjecture 3]{sano}, motivated to generalize Gross's conjecture (\cite{G}) and Darmon's conjecture (\cite{D}), the author presented a conjecture concerning Rubin-Stark elements. After the author wrote the first version of \cite{sano}, Mazur and Rubin formulated in \cite[Conjecture 5.2]{MR} essentially the same conjecture as \cite[Conjecture 3]{sano}. In this paper, we prove this conjecture in a special case. 

We briefly recall the formulation of \cite[Conjecture 3]{sano}. Let $K/L/k$ be a tower of finite extensions of global fields, such that $K/k$ is abelian. Take $S$ and $T$, finite sets of finite places of $k$, satisfying certain conditions (see \S \ref{notation}). Take proper subsets $V \subset V' \subset S$ so that all $v \in V$ (resp. $V'$) split completely in $K$ (resp. $L$). Then, assuming the Rubin-Stark conjecture (\cite[Conjecture B$'$]{R2}), which predicts the existence of Rubin-Stark elements, our conjecture \cite[Conjecture 3]{sano} predicts the following equality: 
\begin{eqnarray}
\mathcal{N}_{K/L}(\varepsilon_{K,S,T,V})=\pm R_{V',V}(\varepsilon_{L,S,T,V'}), \label{conj}
\end{eqnarray}
where $\varepsilon_{K,S,T,V}$ and $\varepsilon_{L,S,T,V'}$ are Rubin-Stark elements for the data $(K/k,S,T,V)$ and $(L/k,S,T,V')$ respectively, $\mathcal{N}_{K/L}$ is the ``higher norm" introduced in \cite[Definition 2.12]{sano}, and $R_{V',V}$ is the ``algebraic regulator map", constructed by using the reciprocity maps at $v\in V'\setminus V$. 

In this paper, we prove the equality (\ref{conj}) under the following three assumptions:
\begin{itemize}
\item[(i)]{$V$ contains all infinite places of $k$,}
\item[(ii)]{all $v\in S$ split completely in $L$,}
\item[(iii)]{$\Gal(K/L)=\prod_{v\in S\setminus V}J_v$, where $J_v\subset \Gal(K/k)$ is the inertia group at $v$.}
\end{itemize}
(See Theorem \ref{mainthm}.) For example, the above assumptions are satisfied in the following case: $L$ is the Hilbert class field of $k$, $S$ is the union of all infinite places of $k$ and some principal prime ideals $\frak p_1,\ldots,\frak p_n$, $K$ is the composite field of the ray class fields modulo $\frak p_i^{e_i}$'s ($e_i$ is a positive integer), and $V$ is the set of all infinite places of $k$. 

Proving the main theorem, the author is inspired by the induction method used by Darmon in \cite[\S 8]{D}. By this method, Darmon proved a weaker statement of his conjecture, which he called ``order of vanishing" (see \cite[Theorem 4.2]{D}). We remark that Mazur and Rubin generalized this method directly to prove the ``order of vanishing" statement in a more general setting (see \cite[Theorem 6.3]{MR}). 
On the other hand, under our assumptions, we use Darmon's method to prove our conjecture completely. 

The organization of this paper is as follows. In \S \ref{ext}, we summarize useful constructions on exterior powers. In \S \ref{secRS}, we review the formulation of the Rubin-Stark conjecture, and summarize some known facts. In \S \ref{secconj}, we review the precise formulation of \cite[Conjecture 3]{sano}, and state the main theorem of this paper (Theorem \ref{mainthm}). In \S \ref{secpr}, we give the proof of the main theorem.

\begin{notation}

For any finite set $\Sigma$, the cardinality of $\Sigma$ is denoted by $|\Sigma|$.

For any abelian group $G$, $\bbZ[G]$-modules are simply called $G$-modules. The tensor product over $\bbZ[G]$ is denoted by $$-\otimes_G-.$$
Similarly, the exterior power over $\bbZ[G]$, and $\Hom$ of $\bbZ[G]$-modules are denoted by 
$$\bigwedge_G \ , \ \Hom_G(-,-),$$
respectively. 

For any subgroup $H$ of $G$, we define the norm element $\N_H\in \bbZ[G]$ by 
$$\N_H=\sum_{\sigma \in H}\sigma.$$




\end{notation}

\section{Exterior powers} \label{ext}

Let $G$ be a finite abelian group. For a $G$-module $M$ and $\varphi\in\Hom_G(M,\bbZ[G])$, there is a $G$-homomorphism
$$\bigwedge_G^rM \longrightarrow \bigwedge_G^{r-1}M$$
for all $r\in\bbZ_{\geq1}$, defined by 
$$m_1\wedge\cdots\wedge m_r \mapsto \sum_{i=1}^r (-1)^{i-1}\varphi(m_i)m_1\wedge\cdots\wedge m_{i-1}\wedge m_{i+1}\wedge\cdots \wedge m_r.$$
This homomorphism is also denoted by $\varphi$.

This construction gives a homomorphism
\begin{eqnarray}
\bigwedge_G^s\Hom_G(M,\bbZ[G]) \longrightarrow \Hom_G(\bigwedge_G^rM, \bigwedge_G^{r-s}M) \label{extmap}
\end{eqnarray}
for all $r, s\in \bbZ_{\geq0}$ such that $r\geq s$, defined by
$$\varphi_1\wedge\cdots\wedge \varphi_s \mapsto (m \mapsto \varphi_s \circ \cdots \circ \varphi_1(m)).$$
From this, we often regard an element of $\bigwedge_G^s\Hom_G(M,\bbZ[G])$ as an element of $\Hom_G(\bigwedge_G^rM, \bigwedge_G^{r-s}M)$.
Note that if $r=s$, $\varphi_1\wedge\cdots\wedge \varphi_r\in\bigwedge_G^r\Hom_G(M,\bbZ[G])$ and $m_1\wedge \cdots \wedge m_r \in \bigwedge_G^rM$, then we have 
$$(\varphi_1\wedge \cdots \wedge \varphi_r)(m_1\wedge \cdots \wedge m_r)= \det(\varphi_i(m_j))_{1\leq i, j \leq r}.$$

\section{The Rubin-Stark conjecture} \label{secRS}
In this section, we review the formulation of the Rubin-Stark conjecture (\cite[Conjecture B]{R2}). In \S \ref{notation}, we set notation which we use throughout this paper. In \S \ref{stateRS}, we state the Rubin-Stark conjecture. In \S \ref{properties}, we summarize some known properties of Rubin-Stark elements. 
\subsection{Notation} \label{notation} 
Let $k$ be a global field. We fix a separable closure $k^{\rm sep}$ of $k$, and any separable extension of $k$ is considered to be in $k^{\rm sep}$. We denote the set of all infinite places of $k$ by $S_\infty(k)$. For any finite separable extension $K/k$ and any set $\Sigma$ of places of $k$, we denote the set of places of $K$ lying above places in $\Sigma$ by $\Sigma_K$. Let $S$ and $T$ be finite sets of places of $k$. In this paper, we call the (ordered) pair $(S,T)$ {\it admissible} for the extension $K/k$ if the following conditions are satisfied:
\begin{itemize}
\item{$S$ is nonempty and contains $S_\infty(k)$ and all places ramifying in $K$,}
\item{$S\cap T=\emptyset$,}
\item{$\mathcal{O}_{K,S,T}^\times$ is torsion-free,}
\end{itemize}
where $\mathcal{O}_{K,S,T}^\times$ is the $(S,T)$-unit group of $K$, defined by 
\begin{eqnarray}
\mathcal{O}_{K,S,T}^\times :=\{ a\in K^\times \ | \ \ord_w(a)=0 \mbox{ for all } w\notin S_K \mbox{ and } a \equiv 1 \ (\text{mod } w') \mbox{ for all } w'\in T_K \}, \nonumber
\end{eqnarray}
where $\ord_w$ is the (normalized) additive valuation at $w$. 

Let $\Omega(k)$ be the set of quadruples $(K,S,T,V)$ satisfying the following:
\begin{itemize}
\item{$K$ is a finite abelian extension of $k$,}
\item{$S$ and $T$ are finite sets of places of $k$ such that $(S,T)$ is admissible for $K/k$,}
\item{$V$ is a proper subset of $S$ such that all $v\in V$ split completely in $K$.}
\end{itemize}
If we fix a finite set $T$ of finite places of $k$, then we define 
$$\Omega(k,T):=\{ (K,S,V) \ | \ (K,S,T,V) \in \Omega (k) \}.$$

Take $(K,S,T,V)\in \Omega(k)$. Let $\mathcal{G}_K$ denote the Galois group $\Gal(K/k)$. For a character $\chi \in \widehat{\mathcal{G}_K}:=\Hom(\mathcal{G}_K,\bbC^\times)$, the $(S,T)$-$L$-function is defined by 
$$L_{k,S,T}(s,\chi):=\prod_{v\in T}(1-\chi({\rm Fr}_v)\N v^{1-s}) \prod_{v \notin S}(1-\chi({\rm Fr}_v)\N v^{-s})^{-1},$$
where ${\rm Fr}_v \in \mathcal{G}_K$ is the Frobenius automorphism at $v$, and $\N v$ is the cardinality of the residue field at $v$. The product in the right hand side converges if ${\rm Re}(s)>1$. It is well-known that $L_{k,S,T}(s,\chi)$ has analytic continuation on the whole complex plane, and is holomorphic at $s=0$. We define 
$r_\chi=r_{\chi,S}:=\ord_{s=0}L_{k,S,T}(s,\chi).$ It is well-known that 
$$r_\chi=
\begin{cases}
|\{ v\in S \ | \ \chi(G_v)=1 \}| &\text{if $\chi\neq 1$}, \\
|S|-1 &\text{if $\chi=1$},
\end{cases}$$
where $G_v \subset \mathcal{G}_K$ is the decomposition group at $v$ (see \cite[Proposition 3.4, Chpt. I]{T}). Note that $r_\chi=r_{\chi^{-1}}$ for any $\chi \in \widehat{\mathcal{G}_K}$. For $r \in \bbZ_{\geq 0}$, define ``$r$-th order Stickelberger element" by 
$$\theta_{K/k,S,T}^{(r)}:=\sum_{\chi\in \widehat{\mathcal{G}_K}, r=r_\chi}\lim_{s \rightarrow 0}s^{-r}L_{k,S,T}(s,\chi^{-1})e_{\chi} \in \bbC[\mathcal{G}_K],$$
where $e_\chi:=|\mathcal{G}_K|^{-1}\sum_{\sigma \in \mathcal{G}_K}\chi(\sigma)\sigma^{-1}$. It is easy to see that $\theta_{K/k,S,T}^{(r)} \in \bbR[\mathcal{G}_K]$. Note that, when $r=0$, this is the usual Stickelberger element. Define 
$$X_{K,S}:=\{ \sum_{w\in S_K}a_ww \in \bigoplus_{w\in S_K}\bbZ w \ | \ \sum_{w \in S_K}a_w=0 \}.$$
Note that $X_{K,S}$ has a natural structure of $\mathcal{G}_K$-module, since $\mathcal{G}_K$ acts on $S_K$. We define 
$$\lambda_{K,S} : \mathcal{O}_{K,S,T}^\times \rightarrow \bbR\otimes_\bbZ X_{K,S}$$
by $\lambda_{K,S}(a):=-\sum_{w \in S_K}\log|a|_ww$, where $|\cdot|_w$ is the normalized absolute value at $w$. By Dirichlet's unit theorem, $\lambda_{K,S}$ induces an isomorphism of $\bbR[\mathcal{G}_K]$-modules
$$\bbR\otimes_\bbZ \mathcal{O}_{K,S,T}^\times \stackrel{\sim}{\rightarrow} \bbR\otimes_\bbZ X_{K,S}.$$ 

\subsection{The statement of the Rubin-Stark conjecture} \label{stateRS}
In this subsection, we state the Rubin-Stark conjecture. We need the following definition, due to Rubin (\cite[\S 1.2]{R2}). 

\begin{definition}
For $(K,S,T,V) \in \Omega(k)$, define
\begin{eqnarray}
\bigcap_{\mathcal{G}_K}^r \mathcal{O}_{K,S,T}^\times := \{ a \in \bbQ \otimes_\bbZ \bigwedge_{\mathcal{G}_K}^r\mathcal{O}_{K,S,T}^\times \ | \ \Phi(a)\in\bbZ[\mathcal{G}_K] \mbox{ for all }\Phi\in\bigwedge_{\mathcal{G}_K}^r\Hom_{\mathcal{G}_K}(\mathcal{O}_{K,S,T}^\times, \bbZ[\mathcal{G}_K]) \},\nonumber
\end{eqnarray}
where $r=r_V:=|V|$. (Note that $\bigcap_{\mathcal{G}_K}^0\mathcal{O}_{K,S,T}^\times=\bbZ[\mathcal{G}_K]$.)

\end{definition}

Note that $\bigcap$ is not the intersection.

{\it{From now, we fix a total order on the set of all places of $k$, and any exterior powers indexed by a set of places of $k$ is arranged by this fixed order. We also fix, for each place $v$ of $k$, a place $w$ of $k^{\rm sep}$ lying above $v$. For any finite separable extension $K/k$, the fixed place lying above $v$ is also denoted by $w$.}} 

\begin{definition}
Let $(K,S,T,V) \in \Omega(k)$, and put $r:=|V|$. Choose $v_0 \in S\setminus V$, and define 
$$x_{K,S,T,V}:=\theta_{K/k,S,T}^{(r)}\bigwedge_{v \in V}(w-w_0) \in \bbR \otimes_\bbZ \bigwedge_{\mathcal{G}_K}^r X_{K,S}.$$
\end{definition}

The following proposition shows that the element $x_{K,S,T,V}$ is well-defined, i.e. $x_{K,S,T,V}$ does not depend on the choice of $v_0\in S\setminus V$. 

\begin{proposition}
Let $(K,S,T,V) \in \Omega(k)$, and put $r:=|V|$. Take $v_0, v_0' \in S \setminus V$. Then we have 
$$\theta_{K/k,S,T}^{(r)}\bigwedge_{v \in V}(w-w_0) = \theta_{K/k,S,T}^{(r)}\bigwedge_{v\in V}(w-w_0') \quad in \quad \bbR \otimes_\bbZ \bigwedge_{\mathcal{G}_K}^r X_{K,S}.$$
\end{proposition}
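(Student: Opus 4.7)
My plan is to extend scalars from $\bbR$ to $\bbC$ and then exploit the orthogonal idempotent decomposition $\bbC[\mathcal{G}_K] = \bigoplus_{\chi \in \widehat{\mathcal{G}_K}} \bbC e_\chi$. The natural map $\bbR \otimes_\bbZ \bigwedge_{\mathcal{G}_K}^r X_{K,S} \to \bbC \otimes_\bbZ \bigwedge_{\mathcal{G}_K}^r X_{K,S}$ is injective, so proving equality in the latter suffices. Since $\theta_{K/k,S,T}^{(r)}$ is by definition a $\bbC$-linear combination of the idempotents $e_\chi$ with $r_\chi = r$, and these idempotents are mutually orthogonal, the problem reduces to showing
$$e_\chi \bigwedge_{v\in V}(w - w_0) = e_\chi \bigwedge_{v\in V}(w - w_0')$$
in $\bbC \otimes_\bbZ \bigwedge_{\mathcal{G}_K}^r X_{K,S}$ for every such $\chi$.

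I would first dispose of $\chi = 1$: since $r_1 = |S|-1$, the hypothesis $r_1 = |V|$ forces $|S\setminus V| = 1$, hence $v_0 = v_0'$ and the identity is vacuous. For $\chi \neq 1$, the formula $r_\chi = |\{v\in S: \chi(G_v)=1\}|$, the automatic inclusion $V \subseteq \{v \in S : \chi(G_v) = 1\}$ (since $v \in V$ implies $G_v = 1$), and the equality $r_\chi = |V|$ together force $\chi(G_v) \neq 1$ for every $v \in S \setminus V$, and in particular for $v_0$ and $v_0'$.

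The main computational step is then to observe that for each such $\chi$ and each $v \in S \setminus V$ the class $e_\chi w$ vanishes in $\bbC \otimes_\bbZ X_{K,S}$: since $G_v$ fixes $w$ one has $\N_{G_v}\, w = |G_v|\, w$, whereas $\N_{G_v}\, e_\chi = (\sum_{\sigma \in G_v}\chi(\sigma))\, e_\chi = 0$. Applied to $v_0$ and $v_0'$ this gives $e_\chi w_0 = e_\chi w_0' = 0$, and then the $\bbC[\mathcal{G}_K]$-multilinearity of the exterior power together with the idempotency $e_\chi^r = e_\chi$ will yield
$$e_\chi \bigwedge_{v \in V}(w - w_0) = \bigwedge_{v \in V} e_\chi\, w = e_\chi \bigwedge_{v \in V}(w - w_0'),$$
which is exactly what is wanted. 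No step is genuinely difficult; the only subtle point is the correct identification of the support of $\theta_{K/k,S,T}^{(r)}$, after which the vanishings $e_\chi w_0 = e_\chi w_0' = 0$ make the rest essentially formal.
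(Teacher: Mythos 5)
Your proof is correct and follows essentially the same route as the paper: both arguments come down to showing that for every character $\chi$ contributing to $\theta_{K/k,S,T}^{(r)}$ one has $e_\chi w_0=e_\chi w_0'=0$, because $r_\chi=|V|$ forces $\chi(G_{v_0})\neq 1$ and $\chi(G_{v_0'})\neq 1$. The paper organizes this via a short case division (treating $\theta^{(r)}=0$ and $r=|S|-1$ separately and then expanding $w-w_0'=(w-w_0)+(w_0-w_0')$), whereas you work character by character from the start, but the key vanishing is identical.
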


\begin{proof}
If $r < \min\{|S|-1,|\{v\in S \ | \ v \mbox{ splits completely in }K  \}|\}$, then $\theta_{K/k,S,T}^{(r)}=0$, so the proposition is trivial. If $r=|S|-1$, then we must have $v_0=v_0'$, so there is nothing to prove. Hence we may assume $V=\{  v\in S \ | \ v \mbox{ splits completely in }K  \}$ and $r<|S|-1$. In this case, $v_0$ and $v_0'$ do not split completely in $K$, so we see that $e_\chi(w_0-w_0')=0$ (in $\bbC \otimes_\bbZ X_{K,S}$) for every $\chi \in \widehat{\mathcal{G}_K}$ such that $r_\chi=r$. The proposition follows by noting that $w-w_0'=(w-w_0)+(w_0-w_0')$.
\end{proof}

For any $r\in \bbZ_{\geq 0}$, the isomorphism 
$$\bbR \otimes_\bbZ \bigwedge_{\mathcal{G}_K}^r \mathcal{O}_{K,S,T}^\times \stackrel{\sim}{\rightarrow} \bbR \otimes_\bbZ \bigwedge_{\mathcal{G}_K}^r X_{K,S}$$
induced by $\lambda_{K,S}$ is also denoted by $\lambda_{K,S}$. 

Now we state the Rubin-Stark conjecture. 

\begin{conjecture}[The Rubin-Stark conjecture, {\cite[Conjecture B]{R2}}]
For $(K,S,T,V) \in \Omega(k)$, there exists a unique $\varepsilon_{K,S,T,V} \in \bigcap_{\mathcal{G}_K}^r \mathcal{O}_{K,S,T}^\times$ such that 
$$\lambda_{K,S}(\varepsilon_{K,S,T,V})=x_{K,S,T,V},$$
where $r=|V|$.
\end{conjecture}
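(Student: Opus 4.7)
The final statement is the Rubin-Stark conjecture, which is presented as a hypothesis rather than proved in this paper; my plan is to separate uniqueness, which is formal, from existence, which is the open content of the conjecture.

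Uniqueness is short. By Dirichlet's unit theorem $\lambda_{K,S}$ becomes an isomorphism after $\otimes_\bbZ \bbR$, hence is injective on $\bbQ \otimes_\bbZ \bigwedge_{\mathcal{G}_K}^r \mathcal{O}_{K,S,T}^\times$. Since $\bigcap_{\mathcal{G}_K}^r \mathcal{O}_{K,S,T}^\times$ is by definition a subgroup of this $\bbQ$-vector space, any two candidates with the same image $x_{K,S,T,V}$ must coincide.

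For existence I would proceed in two steps. First, invert $\lambda_{K,S}$ over $\bbR$ to obtain the unique $\tilde\varepsilon \in \bbR \otimes_\bbZ \bigwedge_{\mathcal{G}_K}^r \mathcal{O}_{K,S,T}^\times$ satisfying $\lambda_{K,S}(\tilde\varepsilon) = x_{K,S,T,V}$; this step is automatic from Dirichlet. The real task is then to show $\tilde\varepsilon$ lies inside the integral lattice $\bigcap_{\mathcal{G}_K}^r \mathcal{O}_{K,S,T}^\times$. This splits into (a) rationality, i.e.\ $\tilde\varepsilon \in \bbQ \otimes_\bbZ \bigwedge_{\mathcal{G}_K}^r \mathcal{O}_{K,S,T}^\times$, which character-by-character is Stark's original conjecture on leading terms of $L$-functions; and (b) the integrality condition $\Phi(\tilde\varepsilon) \in \bbZ[\mathcal{G}_K]$ for every $\Phi \in \bigwedge_{\mathcal{G}_K}^r \Hom_{\mathcal{G}_K}(\mathcal{O}_{K,S,T}^\times, \bbZ[\mathcal{G}_K])$, which is the higher-rank analogue of the Deligne-Ribet/Cassou-Nogu\`es integrality of Stickelberger elements. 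For (a) one would naturally try to reduce by Artin formalism to abelian characters, and for (b) one would hope for a $p$-adic construction of the requisite integral element.

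The hard part, and the reason the conjecture is open in general, is the absence of an explicit construction of units realizing $\tilde\varepsilon$ on the algebraic side. In the classical cases $k=\bbQ$ and $k$ an imaginary quadratic field, cyclotomic and elliptic units respectively serve this purpose and the conjecture is known; for a general global field no such construction is available. My realistic plan is therefore the one adopted in the paper: take the Rubin-Stark conjecture as a working hypothesis and use it as input to derive further arithmetic identities such as (\ref{conj}), rather than attempt the existence statement directly.
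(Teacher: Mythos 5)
The statement is a conjecture that the paper states and assumes rather than proves, and you correctly identify this; your uniqueness argument via the injectivity of $\lambda_{K,S}$ is the standard (and correct) justification for speaking of \emph{the} Rubin-Stark element, and your assessment that existence is the open content, known only in the special cases listed in Remark \ref{RSknown}, matches the paper's treatment exactly.
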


\begin{remark}
Our formulation of the Rubin-Stark conjecture is slightly different from the original formulation of  Rubin in \cite[Conjecture B]{R2}. But from \cite[Proof of Proposition 2.4]{R2}, one easily sees that our formulation is equivalent to the original one. Note also that the unique element $\varepsilon_{K,S,T,V}$ predicted by this conjecture coincides with the one predicted by \cite[Conjecture B$'$]{R2}.
\end{remark}

The element $\varepsilon_{K,S,T,V}$ predicted by the Rubin-Stark conjecture is called {\it Rubin-Stark element}. 

\begin{remark} \label{RSknown}
The Rubin-Stark conjecture for $(K,S,T,V) \in \Omega(k)$ is known to be true, for example, in the following cases:
\begin{itemize}
\item[(i)]{$V=\emptyset$ (\cite[Theorem 3.3]{R2}),}
\item[(ii)]{$K$ is a finite abelian extension of $\bbQ$ or a function field (\cite[Theorem A]{B}),}
\item[(iii)]{all $v\in S$ split completely in $K$ (\cite[Proposition 3.1]{R2}).}
\end{itemize}
\end{remark}

\subsection{Some properties of Rubin-Stark elements} \label{properties}
In this subsection, we fix a finite set $T$ of finite places of $k$ such that $\Omega(k,T)\neq \emptyset$, and {\it assume that the Rubin-Stark conjecture holds for every $(K,S,T,V)$ such that $(K,S,V)\in \Omega(k,T)$}. For the proof of the following two propositions, see \cite{R2} or \cite{sano}. 

\begin{proposition}[{\cite[Proposition 6.1]{R2}, \cite[Proposition 3.5]{sano}}] \label{nr}
Let $(K,S,V),(K',S',V) \in \Omega(k,T)$, and suppose that $K\subset K'$ and $S\subset S'$. Then we have 
$$ \N_{K'/K}^r(\varepsilon_{K',S',T,V})=(\prod_{v \in S'\setminus S}(1-{\rm Fr}_v^{-1}))\varepsilon_{K,S,T,V} \quad \text{in} \quad \bigcap_{\mathcal{G}_K}^r\mathcal{O}_{K,S,T}^\times,$$
where $r= |V|$, $\N_{K'/K}:=\N_{\Gal(K'/K)}=\sum_{\sigma \in \Gal(K'/K)}\sigma$ and $\N_{K'/K}^r$ denotes the $r$-th power of $\N_{K'/K}$. (When $r=0$, $\N_{K'/K}^0$ means the natural map $\bbZ[\mathcal{G}_{K'}]\rightarrow \bbZ[\mathcal{G}_{K}]$.)
\end{proposition}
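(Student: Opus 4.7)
My plan is to mimic the standard argument from Rubin \cite{R2}, exploiting the fact that, under the Rubin-Stark hypothesis for $(K,S,T,V)$, the element $\varepsilon_{K,S,T,V}$ is \emph{uniquely} characterized by the two properties of (i) lying in $\bigcap^r_{\mathcal{G}_K}\mathcal{O}_{K,S,T}^\times$ and (ii) mapping to $x_{K,S,T,V}$ under $\lambda_{K,S}$. Since $\prod_{v \in S'\setminus S}(1-\mathrm{Fr}_v^{-1})$ lies in $\mathbb{Z}[\mathcal{G}_K]$ and acts on both sides, the reduction is to verify that $\mathcal{N}_{K'/K}^r(\varepsilon_{K',S',T,V})$ satisfies the analogues of (i) and (ii) with $x_{K,S,T,V}$ replaced by $\prod_{v \in S'\setminus S}(1-\mathrm{Fr}_v^{-1})\cdot x_{K,S,T,V}$.

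The main computation is the regulator identity (ii). Here I would use the naturality of $\lambda$: the norm $\mathcal{N}_{K'/K}$ sends $\mathcal{O}_{K',S',T}^\times$ into $\mathcal{O}_{K,S',T}^\times$, and at the level of $X$-modules the restriction of places $w'\mapsto w'|_K$ (with places above $v\in S'\setminus S$ sent to $0$ in the appropriate degree-zero quotient) fits into a commutative diagram with the regulator maps. The heart of the matter is then the $L$-function identity
$$L_{k,S',T}(s,\chi) = L_{k,S,T}(s,\chi)\prod_{v \in S'\setminus S}\bigl(1-\chi(\mathrm{Fr}_v)\mathrm{N}v^{-s}\bigr),$$
which after careful bookkeeping with orders of vanishing at $s=0$ (and exploiting the freedom to pick the base place $v_0\in S\setminus V\subset S'\setminus V$) translates to the projection formula
$$\theta^{(r)}_{K'/k,S',T}\ \mapsto\ \prod_{v\in S'\setminus S}(1-\mathrm{Fr}_v^{-1})\cdot \theta^{(r)}_{K/k,S,T}$$
in going from $\mathbb{C}[\mathcal{G}_{K'}]$ to $\mathbb{C}[\mathcal{G}_K]$ via the norm. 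Wedging against $\bigwedge_{v\in V}(w-w_0)$ and using the same base place on both sides yields the desired identity.

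For the integrality statement (i), I would use the description of $\bigcap^r_{\mathcal{G}_K}\mathcal{O}_{K,S,T}^\times$ via pairings: it is enough to show $\Phi(\mathcal{N}_{K'/K}^r(\varepsilon_{K',S',T,V})) \in \mathbb{Z}[\mathcal{G}_K]$ for every $\Phi\in\bigwedge^r_{\mathcal{G}_K}\Hom_{\mathcal{G}_K}(\mathcal{O}_{K,S,T}^\times,\mathbb{Z}[\mathcal{G}_K])$. Any such $\Phi$ lifts, along the natural inclusion $\mathcal{O}_{K,S,T}^\times\hookrightarrow \mathcal{O}_{K',S',T}^\times$ and the norm restriction on $\Hom$-groups, to some $\Phi'\in \bigwedge^r_{\mathcal{G}_{K'}}\Hom_{\mathcal{G}_{K'}}(\mathcal{O}_{K',S',T}^\times,\mathbb{Z}[\mathcal{G}_{K'}])$ whose value at $\varepsilon_{K',S',T,V}$ lies in $\mathbb{Z}[\mathcal{G}_{K'}]$ by hypothesis, and whose image under the augmentation-type projection recovers $\Phi(\mathcal{N}_{K'/K}^r(\varepsilon_{K',S',T,V}))$. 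The main technical obstacle is keeping the various orders of vanishing and the choice of $v_0$ consistent across the two data sets; this is handled by the proposition proved just above (independence of $v_0$) together with the observation that the extra Euler factors $(1-\mathrm{Fr}_v^{-1})$ precisely absorb the jump in the order of vanishing between $L_{k,S,T}(s,\chi)$ and $L_{k,S',T}(s,\chi)$.
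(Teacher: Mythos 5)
The paper itself gives no proof of this proposition (it defers to \cite[Proposition 6.1]{R2} and \cite[Proposition 3.5]{sano}), and your sketch correctly reconstructs that standard argument: apply $\lambda$, use the Euler-factor identity $L_{k,S',T}(s,\chi)=L_{k,S,T}(s,\chi)\prod_{v\in S'\setminus S}(1-\chi({\rm Fr}_v)\N v^{-s})$ together with $r_{\chi,S'}\geq r_{\chi,S}\geq r$ to get the projection formula for $\theta^{(r)}$, choose the same $v_0\in S\setminus V\subset S'\setminus V$ on both sides, and conclude by injectivity of the regulator. Your step (i) is superfluous (and as sketched would face a nontrivial lifting problem for $\Phi$): once the equality is established after tensoring with $\bbQ$, the left-hand side automatically lies in $\bigcap_{\mathcal{G}_K}^r\mathcal{O}_{K,S,T}^\times$, since the right-hand side is the $\bbZ[\mathcal{G}_K]$-multiple $\prod_{v\in S'\setminus S}(1-{\rm Fr}_v^{-1})\cdot\varepsilon_{K,S,T,V}$ of an element of that lattice and the lattice is $\bbZ[\mathcal{G}_K]$-stable.
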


\begin{proposition}[{\cite[Proposition 5.2]{R2}, \cite[Proposition 3.6]{sano}}] \label{ordr}
Let $(K,S,V), (K,S',V') \in \Omega(k,T)$, and suppose that $S \subset S'$, $V\subset V'$ and $S'\setminus S=V'\setminus V$. Put 
$$\Phi_{V',V}=\sgn(V',V)\bigwedge_{v\in V'\setminus V}(\sum_{\sigma\in \mathcal{G}_K}\ord_w(\sigma(\cdot))\sigma^{-1})\in \bigwedge_{\mathcal{G}_K}^{r'-r}\Hom_{\mathcal{G}_K}(\mathcal{O}_{K,S',T}^\times, \bbZ[\mathcal{G}_K]),$$
where $r =|V|$, $r'=|V'|$, and $\sgn(V',V)=\pm 1$ is the sign of the permutation 
$$(V'\setminus V \ V) \mapsto V'.$$
Then we have 
$$\Phi_{V',V}(\varepsilon_{K,S',T,V'})=\varepsilon_{K,S,T,V} \quad \text{in} \quad \bigcap_{\mathcal{G}_K}^r\mathcal{O}_{K,S,T}^\times.$$
\end{proposition}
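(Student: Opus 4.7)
The plan is to apply the regulator map and to reduce everything to an identity on the $X$-side. Since $\mathcal{O}_{K,S,T}^\times \subset \mathcal{O}_{K,S',T}^\times$, and since $\lambda_{K,S'}$ agrees with $\lambda_{K,S}$ on the subspace $\bbQ\otimes_\bbZ\bigwedge_{\mathcal{G}_K}^r \mathcal{O}_{K,S,T}^\times$ (with image landing in $\bbR\otimes_\bbZ\bigwedge_{\mathcal{G}_K}^r X_{K,S}$), it will suffice to compute $\lambda_{K,S'}\bigl(\Phi_{V',V}(\varepsilon_{K,S',T,V'})\bigr)$ in $\bbR\otimes_\bbZ\bigwedge_{\mathcal{G}_K}^r X_{K,S'}$ and show that it equals $x_{K,S,T,V}$. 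The isomorphism $\lambda_{K,S}\otimes\bbR$ will then both force $\Phi_{V',V}(\varepsilon_{K,S',T,V'})$ to lie in $\bbQ\otimes_\bbZ\bigwedge_{\mathcal{G}_K}^r\mathcal{O}_{K,S,T}^\times$ and identify it there with $\varepsilon_{K,S,T,V}$, both viewed as elements of $\bigcap_{\mathcal{G}_K}^r\mathcal{O}_{K,S,T}^\times$.

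The next step is to translate each $\Phi_v$ (for $v\in V'\setminus V$) into a map on $X_{K,S'}$. Every such $v$ splits completely in $K$, so the places of $K$ above $v$ are exactly $\{\tau w : \tau\in\mathcal{G}_K\}$ and each has residue norm $\N v$. Unwinding definitions gives
$$\Phi_v(u) \;=\; \frac{1}{\log\N v}\,\phi_v^X\bigl(\lambda_{K,S'}(u)\bigr) \qquad (u\in\mathcal{O}_{K,S',T}^\times),$$
where $\phi_v^X\colon X_{K,S'}\to\bbZ[\mathcal{G}_K]$ is the $\mathcal{G}_K$-equivariant map characterised by $\phi_v^X(\tau w)=\tau$ and $\phi_v^X(w')=0$ for $w'$ not lying above $v$. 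Via the exterior-power formalism of \S\ref{ext}, this upgrades to the compatibility: after transport along $\lambda_{K,S'}$, the operator $\Phi_{V',V}$ corresponds to $\sgn(V',V)\bigl(\prod_{v\in V'\setminus V}\log\N v\bigr)^{-1}\bigwedge_{v\in V'\setminus V}\phi_v^X$.

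Substituting the Rubin-Stark identity $\lambda_{K,S'}(\varepsilon_{K,S',T,V'})=\theta_{K/k,S',T}^{(r')}\bigwedge_{v\in V'}(w-w_0)$, with $v_0\in S\setminus V = S'\setminus V'$, the computation then splits cleanly. On the $L$-function side, the Taylor expansion $1-\N v^{-s}=s\log\N v+O(s^2)$ at each $v\in V'\setminus V$, combined with the fact that such $v$ has trivial decomposition group in $K$ (so $\chi(\mathrm{Fr}_v)=1$ uniformly in $\chi$), yields
$$\theta_{K/k,S',T}^{(r')} \;=\; \Bigl(\prod_{v\in V'\setminus V}\log\N v\Bigr)\theta_{K/k,S,T}^{(r)}.$$
On the exterior-algebra side, since $\phi_v^X(w-w_0)=\delta_{v,v'}$ (identity element) for $v,v'\in V'$, the iterated action from \S\ref{ext} gives
$$\Bigl(\bigwedge_{v\in V'\setminus V}\phi_v^X\Bigr)\Bigl(\bigwedge_{v\in V'}(w-w_0)\Bigr) \;=\; \sgn(V',V)\bigwedge_{v\in V}(w-w_0).$$
Combining these two displays, the $\log\N v$ factors cancel and the two copies of $\sgn(V',V)$ multiply to $1$, leaving precisely $\theta_{K/k,S,T}^{(r)}\bigwedge_{v\in V}(w-w_0)=x_{K,S,T,V}=\lambda_{K,S}(\varepsilon_{K,S,T,V})$.

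The step I expect to be the main obstacle is the sign bookkeeping in the identification of $\Phi_{V',V}$ with a wedge of $\phi_v^X$'s: the prefactor $\sgn(V',V)$ in the definition of $\Phi_{V',V}$ is engineered precisely to cancel the analogous sign introduced when the ordered tuple $V'$ is split as $(V'\setminus V,V)$. Once this (together with the usual care about the $\mathcal{G}_K$-action on places) is handled, everything else reduces to a routine Taylor expansion and a formal determinant computation in the exterior algebra.
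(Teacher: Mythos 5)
Your argument is correct and is essentially the proof given in the cited sources: the paper itself omits the proof of this proposition, deferring to \cite[Proposition 5.2]{R2} and \cite[Proposition 3.6]{sano}, where the argument is exactly this transport along $\lambda_{K,S'}$, the factorisation of the extra Euler factors $1-\N v^{-s}=s\log\N v+O(s^2)$ at the completely split places, and the contraction identity on $\bigwedge_{v\in V'}(w-w_0)$ with the two occurrences of $\sgn(V',V)$ cancelling. The only point worth making explicit is the final descent step — that $\lambda_{K,S'}\bigl(\Phi_{V',V}(\varepsilon_{K,S',T,V'})\bigr)\in\bbR\otimes_\bbZ\bigwedge^r_{\mathcal{G}_K}X_{K,S}$ forces membership in $\bbQ\otimes_\bbZ\bigwedge^r_{\mathcal{G}_K}\mathcal{O}_{K,S,T}^\times$ because $\lambda_{K,S}$ is an isomorphism onto $\bbR\otimes_\bbZ X_{K,S}$ and the relevant inclusions split over $\bbR[\mathcal{G}_K]$ — together with the fact that integrality of the resulting element is supplied by the standing assumption that the Rubin--Stark conjecture holds for $(K,S,T,V)$.
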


\section{The refined conjecture} \label{secconj}
In this section, we recall the formulation of \cite[Conjecture 3]{sano}. The main result of this paper is stated in \S \ref{mainresult} (Theorem \ref{mainthm}). {\it Throughout this section, we assume that the Rubin-Stark conjecture holds for every $(K,S,T,V)\in \Omega(k)$. In particular, note that Conjecture \ref{sanoconj} and Theorem \ref{mainthm} are stated under the assumption that the Rubin-Stark conjecture holds for every $(K,S,T,V)\in \Omega(k)$.}

\subsection{The statement of the conjecture}
Let $S$ and $T$ be finite sets of places of $k$. Let $\Upsilon(k,S,T)$ be the set of quadruples $(K,L,V,V')$ satisfying the following:
\begin{itemize}
\item{$(K,S,T,V), (L,S,T,V')\in \Omega(k)$,}
\item{$L \subset K$,}
\item{$V \subset V'$.}
\end{itemize}
Assume $\Upsilon(k,S,T)\neq \emptyset$, and fix $(K,L,V,V') \in \Upsilon(k,S,T)$. We use the following notations:
\begin{itemize}
\item{$r:=|V|$,}
\item{$r':=|V'|$,}
\item{$d:=r'-r$,}
\item{$G:=\mathcal{G}_K(=\Gal(K/k))$,}
\item{$H:=\Gal(K/L)$,}
\item{$I(H):=\ker(\bbZ[H]\rightarrow \bbZ)$ (the augmentation ideal),}
\item{$I_H:=I(H)\bbZ[G](=\ker(\bbZ[G] \rightarrow \bbZ[G/H]))$.}
\end{itemize}
For $n \in \bbZ_{\geq 0},$
\begin{itemize}
\item{$Q(H)^n:=I(H)^n/I(H)^{n+1}$,}
\item{$Q_H^n:=I_H^n/I_H^{n+1}$.}
\end{itemize}
It is easy to see that there is a natural isomorphism of $G/H$-modules 
$$\bbZ[G/H]\otimes_\bbZ Q(H)^n \simeq Q_H^n.$$
We often identify these $G/H$-modules.

We define 
$$\mathcal{N}_{K/L} : \bigcap_G^r\mathcal{O}_{K,S,T}^\times \rightarrow (\bigcap_G^r\mathcal{O}_{K,S,T}^\times )\otimes_\bbZ \bbZ[H]/I(H)^{d+1}$$
by 
$$\mathcal{N}_{K/L}(a) := \sum_{\sigma \in H} \sigma a \otimes \sigma^{-1}.$$

For $v \in V' \setminus V$, define 
$$\varphi_v=\varphi_v^{K/L} : \mathcal{O}_{L,S,T}^\times \rightarrow Q_H^1$$
by $\varphi_v(a):=\sum_{\sigma \in G/H}({\rm rec}_w(\sigma a)-1) \sigma^{-1}$, where ${\rm rec}_w$ is the reciprocity map at $w$. By \cite[Proposition 2.7]{sano}, $\bigwedge_{v\in V'\setminus V}\varphi_v \in \bigwedge_{G/H}^d\Hom_{G/H}(\mathcal{O}_{L,S,T}^\times,Q_H^1)$ defines the homomorphism (``algebraic regulator map")
$$\bigwedge_{v\in V'\setminus V}\varphi_v : \bigcap_{G/H}^{r'}\mathcal{O}_{L,S,T}^\times \rightarrow (\bigcap_{G/H}^{r}\mathcal{O}_{L,S,T}^\times) \otimes_{\bbZ}Q(H)^d.$$

Recall the definition of the canonical injection
$$\nu_{K/L}:\bigcap_{G/H}^r\mathcal{O}_{L,S,T}^\times \rightarrow \bigcap_G^r \mathcal{O}_{K,S,T}^\times$$
constructed in \cite[Lemma 2.11]{sano}. Define 
$$\iota_G : \bigwedge_G^r \Hom_G(\mathcal{O}_{K,S,T}^\times,\bbZ[G]) \rightarrow \Hom_G(\bigwedge_G^r \mathcal{O}_{K,S,T}^\times,\bbZ[G])$$
by $\iota_G(\varphi_1\wedge\cdots\wedge\varphi_r)(u_1\wedge\cdots\wedge u_r)=\det(\varphi_i(u_j))_{1\leq i,j\leq r}$. (This is the map constructed in (\ref{extmap}).) It is not difficult to see that the map
$$\alpha_G:\bigcap_G^r\mathcal{O}_{K,S,T}^\times \rightarrow \Hom_G(\im \iota_G,\bbZ[G])$$
defined by $\alpha_G(a)(\Phi)=\Phi(a)$ is an isomorphism (see \cite[\S 1.2]{R2}). Similarly we can define the map 
$$\iota_{G/H} : \bigwedge_{G/H}^r \Hom_{G/H}(\mathcal{O}_{L,S,T}^\times,\bbZ[G/H]) \rightarrow \Hom_{G/H}(\bigwedge_{G/H}^r \mathcal{O}_{L,S,T}^\times, \bbZ[G/H]),$$
and we have the isomorphism 
$$\alpha_{G/H}:\bigcap_{G/H}^r\mathcal{O}_{L,S,T}^\times \stackrel{\sim}{\rightarrow} \Hom_{G/H}(\im \iota_{G/H},\bbZ[G/H]).$$
Let $\kappa : \bbZ[G/H] \stackrel{\sim}{\rightarrow} \bbZ[G]^H$ be the isomorphism defined by $1\mapsto \N_H$. Define 
$$\beta_{K/L}:  \Hom_{G/H}(\im \iota_{G/H},\bbZ[G/H]) \rightarrow \Hom_{G}(\im \iota_{G},\bbZ[G])$$
by $\beta_{K/L}(f)(\Phi)=\kappa(f(\Phi^H))$, where $\Phi^H \in \im \iota_{G/H}$ is the image of $\Phi \in\im \iota_G$ under the map $\im \iota_G\rightarrow \im\iota_{G/H}$ induced by the map 
$$\bigwedge_G^r \Hom_G(\mathcal{O}_{K,S,T}^\times,\bbZ[G]) \rightarrow \bigwedge_{G/H}^r\Hom_{G/H}(\mathcal{O}_{L,S,T}^\times,\bbZ[G/H])$$
defined by $\varphi_1\wedge\cdots\wedge\varphi_r \mapsto (\kappa^{-1}\circ\varphi_1)\wedge\cdots\wedge(\kappa^{-1}\circ\varphi_r)$. 
Now we define 
$$\nu_{K/L}:=\alpha_G^{-1}\circ \beta_{K/L} \circ \alpha_{G/H}.$$
Note that, if $r=0$, then we have $\nu_{K/L}=\kappa$. 
As proved in \cite[Lemma 2.11]{sano},  the map $\nu_{K/L}$ is injective. The same result shows that the map 
$$(\bigcap_{G/H}^{r}\mathcal{O}_{L,S,T}^\times) \otimes_{\bbZ}Q(H)^d \rightarrow( \bigcap_G^r\mathcal{O}_{K,S,T}^\times) \otimes_\bbZ Q(H)^d \rightarrow (\bigcap_G^r\mathcal{O}_{K,S,T}^\times )\otimes_\bbZ \bbZ[H]/I(H)^{d+1}$$
induced by $\nu_{K/L}$ and the inclusion $Q(H)^d \hookrightarrow \bbZ[H]/I(H)^{d+1}$ is also injective. This injection is also denoted by $\nu_{K/L}$. 

\begin{conjecture}[{\cite[Conjecture 3]{sano}, \cite[Conjecture 5.2]{MR}}] \label{sanoconj}
We have 
$$\mathcal{N}_{K/L}(\varepsilon_{K,S,T,V})\in  \im \nu_{K/L},$$
and an equality
$$\nu_{K/L}^{-1}(\mathcal{N}_{K/L}(\varepsilon_{K,S,T,V}))=\sgn(V',V)(\bigwedge_{v\in V'\setminus V}\varphi_v)(\varepsilon_{L,S,T,V'}).$$
($\sgn(V',V)$ is as in Proposition \ref{ordr}.)
\end{conjecture}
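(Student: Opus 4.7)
I plan to prove Conjecture \ref{sanoconj} under assumptions (i)--(iii) by induction on $d = |V' \setminus V|$, adapting Darmon's induction method from \cite[\S 8]{D} and the generalization of Mazur--Rubin in \cite[\S 6]{MR}.

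\textbf{Base case.} When $d = 0$, we have $V = V'$, and the conjecture asserts $\mathcal{N}_{K/L}(\varepsilon_{K,S,T,V}) = \nu_{K/L}(\varepsilon_{L,S,T,V})$ in $(\bigcap_G^r \mathcal{O}_{K,S,T}^\times) \otimes \bbZ[H]/I(H)$. Modulo $I(H) \simeq \bbZ$, the left-hand side reduces to $\N_H \cdot \varepsilon_{K,S,T,V}$, and by Proposition \ref{nr} (applied with $S' = S$, so the Euler factor is trivial) together with the very construction of $\nu_{K/L}$, this equals $\nu_{K/L}(\varepsilon_{L,S,T,V})$.

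\textbf{Inductive step.} For $d \geq 1$, pick $v^* \in V' \setminus V$ and set $K_0 := K^{J_{v^*}}$. Assumption (iii) yields a direct product decomposition $H = J_{v^*} \times H_0$ with $H_0 := \prod_{v \in S \setminus V, v \neq v^*} J_v = \Gal(K_0/L)$. Since $v^*$ is unramified in $K_0/k$, the quadruple $(K_0, L, V, V' \setminus \{v^*\})$ again satisfies (i)--(iii), with $d_1 = d - 1$. The inductive hypothesis thus gives the conjecture for this quadruple modulo $I(H_0)^d$. Combined with Proposition \ref{nr} for $K/K_0$ (yielding $\N_{K/K_0}^r(\varepsilon_{K,S,T,V}) = \varepsilon_{K_0,S,T,V}$), one obtains information on $\mathcal{N}_{K/L}(\varepsilon_{K,S,T,V})$ in the $H_0$-direction.

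\textbf{Main step and obstacle.} The remaining and most delicate task is to refine this congruence to one modulo $I(H)^{d+1}$, incorporating the $\varphi_{v^*}$-factor. Using the decomposition $\bbZ[H] = \bbZ[J_{v^*}] \otimes_\bbZ \bbZ[H_0]$ and expanding $\sigma^{-1} = \tau^{-1} \rho^{-1}$ (for $\sigma = \tau\rho$ with $\tau \in J_{v^*}$, $\rho \in H_0$) in $\bbZ[H]/I(H)^{d+1}$ via the graded decomposition $I(H)^d/I(H)^{d+1} \simeq \bigoplus_{i + j = d} Q(J_{v^*})^i \otimes_\bbZ Q(H_0)^j$, I would split $\mathcal{N}_{K/L}(\varepsilon_{K,S,T,V})$ accordingly. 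The summand indexed by $(i, j) = (1, d - 1)$ should, when combined with the inductive identity at $K_0$, reproduce the RHS: its $J_{v^*}$-contribution corresponds via local class field theory to the reciprocity map $\rec_w$ at $v^*$, which by (ii) lands in the inertia $J_{v^*}$ on $\mathcal{O}_{L,w}^\times$ (since $v^*$ splits completely in $L$), matching the definition of $\varphi_{v^*}$. The main obstacle is the combinatorial verification that the off-diagonal summands contribute trivially modulo $I(H)^{d+1}$---requiring iterated application of the inductive hypothesis to the intermediate fields $K^{J_v}$ for $v \in V' \setminus V$, together with the ``order-of-vanishing'' result \cite[Theorem 6.3]{MR}---and careful bookkeeping to recover the sign $\sgn(V', V)$ from the wedge product structure.
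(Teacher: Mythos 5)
Your overall strategy (induction plus Darmon's method) is the right one in spirit, but as written the argument has two genuine gaps. First, your inductive setup keeps $S$ fixed while shrinking $V'$ to $V'\setminus\{v^*\}$. Under assumption (ii) every place of $S$ splits completely in $L$, so every nontrivial character of $\Gal(L/k)$ has $r_\chi=|S|$ while the trivial character has $r_\chi=|S|-1$; hence for $|V'\setminus\{v^*\}|<|S|-1$ the element $\varepsilon_{L,S,T,V'\setminus\{v^*\}}$ vanishes and the inductive statement you invoke is vacuous --- it yields no information ``in the $H_0$-direction''. The correct induction must shrink $S$ together with the field: one first reduces to $r'=|S|-1$ (so $V'=S\setminus\{v_0\}$), and then for each subset $X\subset W:=S\setminus V$ passes to $K_X$ with $\Gal(K_X/L)=\prod_{v\in X}J_v$ \emph{and} to the smaller set $S_X=V\cup X$. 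The deleted Euler factors $\prod_{v\in W\setminus X}(1-{\rm Fr}_v^{-1})$ supplied by Proposition \ref{nr} are then exactly what match, via the unramified reciprocity law $\varphi_v=(\sum_{\sigma}\ord_w(\sigma(\cdot))\sigma^{-1})({\rm Fr}_v-1)$, the factors $\prod_{v\in W\setminus X}({\rm Fr}_v-1)$ coming out of the regulator side (Lemma \ref{lem2}).

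Second, the step you yourself flag as the ``main obstacle'' --- showing that the off-diagonal graded summands contribute trivially --- is precisely the missing content, and the bigraded decomposition into $Q(J_{v^*})^i\otimes_{\bbZ} Q(H_0)^j$ is not obviously tractable: controlling the pieces with $i\geq 2$ requires more than the single inductive hypothesis at $K_0=K^{J_{v^*}}$. The tool that closes this gap is the multilinear inclusion--exclusion identity of Lemma \ref{lem4}: writing $\sigma=\prod_{v\in W}\sigma_v$ with $\sigma_v\in J_v$, one has $\sum_{X\subset W}(-1)^{|W\setminus X|}\pi_X(\sigma)=\prod_{v\in W}(\sigma_v-1)\in I(H)^{|W|}=I(H)^{d+1}$, so $\pi_W=\id$ is determined modulo $I(H)^{d+1}$ by the projections $\pi_X$ over all \emph{proper} subsets $X\subset W$ simultaneously, each of which is handled by the inductive hypothesis together with Lemma \ref{lem2} (and Lemma \ref{lem3} for $X=\emptyset$). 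With this identity in hand there is also no need to invoke the order-of-vanishing theorem of Mazur--Rubin.
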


\begin{remark}
In the case $r=0$, the definitions of the maps corresponding to $\mathcal{N}_{K/L}$ and $\nu_{K/L}$  are different in the original conjecture \cite[Conjecture 3]{sano}. More precisely, in the case $r=0$, define 
$$\mathcal{N}_{K/L}' : \bigcap_G^0 \mathcal{O}_{K,S,T}^\times =\bbZ[G] \rightarrow \bbZ[G]/I_H^{d+1}$$
to be the natural map, and 
$$\nu_{K/L}' : (\bigcap_{G/H}^0\mathcal{O}_{L,S,T}^\times) \otimes_\bbZ Q(H)^d =\bbZ[G/H]\otimes_\bbZ Q(H)^d\simeq Q_H^d \hookrightarrow \bbZ[G]/I_H^{d+1}$$
to be the natural injection. Then \cite[Conjecture 3]{sano} in this case claims 
$$\mathcal{N}_{K/L}'(\varepsilon_{K,S,T,\emptyset})  \in  \im \nu_{K/L}' (=Q_H^d),$$
and an equality
$$\nu_{K/L}'^{-1}(\mathcal{N}_{K/L}'(\varepsilon_{K,S,T,\emptyset}))=(\bigwedge_{v\in V'}\varphi_v)(\varepsilon_{L,S,T,V'}).$$
In \cite[Lemma 5.6]{MR}, Mazur and Rubin observed that $\mathcal{N}_{K/L}'(\varepsilon_{K,S,T,\emptyset})\in  \im \nu_{K/L}'$ if and only if $\mathcal{N}_{K/L}(\varepsilon_{K,S,T,\emptyset})\in  \im \nu_{K/L}(=\bbZ[G]^H \otimes_\bbZ Q(H)^d)$, and if this equivalent conditions are satisfied, then 
$$\nu_{K/L}^{-1}(\mathcal{N}_{K/L}(\varepsilon_{K,S,T,\emptyset}))=\nu_{K/L}'^{-1}(\mathcal{N}_{K/L}'(\varepsilon_{K,S,T,\emptyset})).$$
Hence, the formulation of Conjecture \ref{sanoconj} is equivalent to \cite[Conjecture 3]{sano}. Note also that, the injection $\nu_{K/L}$ is essentially the same as $\bold{j}_{K/L}$ defined in \cite[Lemma 4.9]{MR} (note that our $K/L$ is $L/K$ in \cite{MR}), so Conjecture \ref{sanoconj} is equivalent to the conjecture of Mazur and Rubin in \cite[Conjecture 5.2]{MR}.

\end{remark}

\begin{remark} \label{sanotrue}
The result of Burns, Kurihara, and the author in \cite{BKS} shows that Conjecture \ref{sanoconj} is true in the case that $k=\bbQ$ or $k$ is a function field. 
\end{remark}



For later use, we record some properties of the injection $\nu_{K/L}$. 

\begin{lemma} \label{lemnu}
\begin{itemize}
\item[(i)]{For every $a\in \bigcap_G^r \mathcal{O}_{K,S,T}^\times$, we have 
$$\nu_{K/L}(\N_{K/L}^r(a))=\N_{K/L} a,$$
where $\N_{K/L}:=\N_H$. (When $r=0$, $\N_{K/L}^0$ means the natural map $\bbZ[G]\rightarrow \bbZ[G/H]$.)}
\item[(ii)]{For any intermediate field $K'$ of $K/L$, we have 
$$\nu_{K/L}=\nu_{K/K'}\circ\nu_{K'/L}\quad \text{on} \quad \bigcap_{G/H}^r\mathcal{O}_{L,S,T}^\times.$$}
\end{itemize}
\end{lemma}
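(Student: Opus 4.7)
The plan is to check both parts on an arbitrary $\Phi = \varphi_1 \wedge \cdots \wedge \varphi_r \in \im \iota_G$, exploiting that $\alpha_G$ is an isomorphism onto $\Hom_G(\im\iota_G, \bbZ[G])$. The whole argument then rests on one elementary compatibility of $\kappa$: for any $y \in \bbZ[G]$, we have $\N_H y \in \bbZ[G]^H$ and $\kappa^{-1}(\N_H y) = \bar y$, where $\bar y$ is the image of $y$ in $\bbZ[G/H]$. This follows from $\kappa(1)=\N_H$ together with the observation that, for any lift $\tilde\sigma \in G$ of $\bar\sigma \in G/H$, the product $\N_H \tilde\sigma = \tilde\sigma\N_H$ depends only on $\bar\sigma$.

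For (i), injectivity of $\alpha_G$ reduces the claim to verifying, for every $\Phi$ as above, the identity
\[
\Phi(\N_{K/L} a) = \kappa\bigl(\Phi^H(\N_{K/L}^r a)\bigr) \quad \text{in } \bbZ[G].
\]
The left-hand side equals $\N_H \Phi(a)$ by $G$-linearity of $\Phi$. On a pure wedge $a=a_1\wedge\cdots\wedge a_r$ (enough by $\bbQ$-linearity), the determinantal formula from \S \ref{ext} gives
\[
\Phi^H(\N_{K/L}^r a) = \det\bigl((\kappa^{-1}\circ\varphi_i)(\N_H a_j)\bigr)_{1\le i,j \le r} = \det\bigl(\overline{\varphi_i(a_j)}\bigr)_{1\le i,j \le r} = \overline{\Phi(a)},
\]
using the key identity in the middle step and the fact that determinants commute with the ring quotient $\bbZ[G]\to\bbZ[G/H]$. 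Applying $\kappa$ and invoking the key identity once more yields $\N_H \Phi(a)$, matching the left-hand side.

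For (ii), set $H':=\Gal(K/K')$, so $\Gal(K'/L)\simeq H/H'$. The three relevant isomorphisms $\kappa_{K/L}$, $\kappa_{K'/L}$, $\kappa_{K/K'}$ satisfy the transitivity relation $\kappa_{K/K'}\circ\kappa_{K'/L}=\kappa_{K/L}$: both are $\bbZ[G/H]$-linear maps $\bbZ[G/H]\to\bbZ[G]^H$, and after choosing coset representatives of $H'\subset H$ one computes $\N_{H'}\cdot\sum_{\tau H'\in H/H'}\tilde\tau=\N_H$, so both send $1$ to $\N_H$. The analogous compatibility of the reductions of $\Phi$, namely $(\Phi^{H'})^{H/H'}=\Phi^H$, is an immediate consequence. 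Unpacking the defining formula for each $\nu$ and evaluating against $\Phi$ via $\alpha_G$, both $\nu_{K/L}(x)$ and $\nu_{K/K'}(\nu_{K'/L}(x))$ send $\Phi$ to $\kappa_{K/L}\bigl(\Phi^H(x)\bigr)$, and injectivity of $\alpha_G$ completes the proof.

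The main obstacle is purely notational: one must unwrap the nested definitions of $\iota$, $\alpha$, $\beta$, and $\kappa$ at three levels of the tower $K/K'/L$ and at several wedge-degrees, while keeping consistent the convention under which $\N_{K/L}^r(a)$ is interpreted as an element of $\bigcap_{G/H}^r\mathcal{O}_{L,S,T}^\times$. Once the base identity $\kappa^{-1}(\N_H y)=\bar y$ is noted, every subsequent step is formal.
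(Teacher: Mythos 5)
Your proof is correct; the paper itself omits the argument (``This is easy, so we omit the proof''), and what you have written is precisely the intended unwinding of the definitions, with the two essential points correctly identified: the identity $\kappa^{-1}(\N_H y)=\bar y$ (which gives both $\Phi(\N_H a)=\kappa(\overline{\Phi(a)})$ and the computation of $\Phi^H$ on $\N_{K/L}^r a$ via the determinant formula), and the transitivity relations $\kappa_{K/K'}\circ\kappa_{K'/L}=\kappa_{K/L}$ and $(\Phi^{H'})^{H/H'}=\Phi^H$ for part (ii). The one point you rightly flag --- the convention identifying $\N_{K/L}^r(a)=\N_H a_1\wedge\cdots\wedge\N_H a_r$ with an element of $\bbQ\otimes_\bbZ\bigwedge_{G/H}^r\mathcal{O}_{L,S,T}^\times$ --- is handled consistently, so no gap remains.
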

\begin{proof}
This is easy, so we omit the proof. 
\end{proof}

Conjecture \ref{sanoconj} has a natural functorial property as follows.

\begin{proposition} \label{functorial}
Assume that Conjecture \ref{sanoconj} holds for $(K,L,V,V')\in \Upsilon(k,S,T)$. Then Conjecture \ref{sanoconj} holds for $(K',L,V,V')\in \Upsilon(k,S,T)$ such that $K' \subset K$.
\end{proposition}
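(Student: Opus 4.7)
The plan is to deduce the identity of Conjecture \ref{sanoconj} for $(K', L, V, V')$ from the assumed identity for $(K, L, V, V')$ by applying a natural reduction map between the ambient modules. Setting $N := \Gal(K/K')$, we have $N \subseteq H$ because $L \subseteq K' \subseteq K$; put $G' := G/N$ and $H' := H/N$. The reduction to apply is
\[
\Pi := \N_{K/K'}^r \otimes \rho_H : \Bigl(\bigcap_G^r \mathcal{O}_{K,S,T}^\times\Bigr) \otimes_\bbZ \bbZ[H]/I(H)^{d+1} \longrightarrow \Bigl(\bigcap_{G'}^r \mathcal{O}_{K',S,T}^\times\Bigr) \otimes_\bbZ \bbZ[H']/I(H')^{d+1},
\]
where $\rho_H$ is induced by the projection $H \twoheadrightarrow H'$ (which sends $I(H)^{d+1}$ into $I(H')^{d+1}$).

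For the left-hand side of the identity for $(K,L,V,V')$, Proposition \ref{nr} (applied with $S' = S$, so the Euler factor is trivial) gives $\N_{K/K'}^r(\varepsilon_{K,S,T,V}) = \varepsilon_{K',S,T,V}$. Combined with the equivariance $\N_{K/K'}^r(\sigma a) = \bar\sigma \N_{K/K'}^r(a)$ for $\sigma \in G$ and the observation that each $\tau \in H'$ has exactly $|N|$ lifts $\sigma \in H$, one computes
\[
\Pi\bigl(\mathcal{N}_{K/L}(\varepsilon_{K,S,T,V})\bigr) = |N| \cdot \mathcal{N}_{K'/L}(\varepsilon_{K',S,T,V}).
\]
For the right-hand side, I would factor $\nu_{K/L} = \nu_{K/K'} \circ \nu_{K'/L}$ via Lemma \ref{lemnu}(ii) and use the key identity $\N_{K/K'}^r \circ \nu_{K/K'} = |N| \cdot \mathrm{id}$ on $\bigcap_{G'}^r \mathcal{O}_{K',S,T}^\times$; this follows from Lemma \ref{lemnu}(i) applied to the tower $K/K'/k$ together with the $N$-invariance of $\nu_{K/K'}(b)$ (apparent from the $\beta_{K/K'}$-construction producing values in $\bbZ[G]^N = \bbZ[G]\cdot\N_N$). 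Combined with the pointwise compatibility $\rho_H \circ \varphi_v^{K/L} = \varphi_v^{K'/L}$ (from the functoriality of local reciprocity under $H \twoheadrightarrow H'$), the computation gives
\[
\Pi\Bigl(\nu_{K/L}\bigl(\sgn(V',V)\bigwedge_{v \in V' \setminus V} \varphi_v^{K/L}(\varepsilon_{L,S,T,V'})\bigr)\Bigr) = |N| \cdot \nu_{K'/L}\bigl(\sgn(V',V)\bigwedge_{v \in V' \setminus V}\varphi_v^{K'/L}(\varepsilon_{L,S,T,V'})\bigr).
\]

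Comparing yields $|N|$ times the asserted identity for $(K',L,V,V')$ in the ambient module. The main obstacle is canceling the factor $|N|$, since this module may have $|N|$-torsion: for instance the graded piece $Q(H')^1$ is isomorphic as an abelian group to $H'$ and can contain elements of order dividing $|N|$. I would resolve this via a complementary ``dual'' reduction $\nu_{K/K'} \otimes \sigma_N$ going in the opposite direction, where $\sigma_N: \bbZ[H']/I(H')^{d+1} \to \bbZ[H]/I(H)^{d+1}$ is the well-defined map $\tau \mapsto \N_N\tilde\tau$ (independent of the choice of lift $\tilde\tau$): a parallel computation exploiting the $N$-invariance of $\nu_{K/L}(\alpha)$ would show that this dual map annihilates the difference $\mathcal{N}_{K'/L}(\varepsilon_{K',S,T,V}) - \nu_{K'/L}(\sgn(V',V)\bigwedge \varphi_v^{K'/L}(\varepsilon_{L,S,T,V'}))$, and combining this with an induction on $[K:K']$ to reduce to the case where $N$ is cyclic of prime order should pin the difference down to zero.
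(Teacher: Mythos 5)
Your first computation is correct as far as it goes, but the choice of reduction map $\Pi = \N_{K/K'}^r\otimes\rho_H$ creates exactly the problem you identify and do not resolve: both sides acquire a factor of $|N|$, and the target module $(\bigcap_{G'}^r\mathcal{O}_{K',S,T}^\times)\otimes_\bbZ\bbZ[H']/I(H')^{d+1}$ genuinely has $|N|$-torsion (already $Q(H')^1\cong H'\otimes_\bbZ(\text{stuff})$ can have such torsion), so the identity ``$|N|\cdot A = |N|\cdot B$'' does not yield $A=B$. The proposed rescue is not a proof. The map $\sigma_N:\tau\mapsto\N_N\widetilde\tau$ is well defined, but the ``parallel computation'' is not carried out, and it does not work as stated: writing $a=\varepsilon_{K,S,T,V}$ and $b=\varepsilon_{K',S,T,V}$, one finds $(\nu_{K/K'}\otimes\sigma_N)(\mathcal{N}_{K'/L}(b))=\sum_{\tau\in H'}\sum_{n\in N}\widetilde\tau n a\otimes\N_N\widetilde\tau^{-1}$, whereas $\mathcal{N}_{K/L}(a)=\sum_{\tau\in H'}\sum_{n\in N}\widetilde\tau n a\otimes n^{-1}\widetilde\tau^{-1}$; these differ by terms involving $\N_N-n^{-1}$ that have no reason to vanish modulo $I(H)^{d+1}$. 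Nor does reducing to $N$ cyclic of prime order $p$ help, since the target can still have $p$-torsion. So the gap is real.

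The fix is to choose the reduction map more carefully, which is what the paper does: apply \emph{only} the projection $H\to H'$ on the group-ring factor, leaving the first tensor factor as $\bigcap_G^r\mathcal{O}_{K,S,T}^\times$ (over $K$, not $K'$). Grouping the sum $\sum_{\sigma\in H}\sigma a\otimes\sigma^{-1}$ by cosets of $N$ then produces $\sum_{\tau\in H'}\widetilde\tau(\N_{K/K'}a)\otimes\tau^{-1}$ --- the full norm element $\N_{K/K'}=\N_N$ acting on $a$, not $|N|$ times a single term --- and Lemma \ref{lemnu}(i) together with Proposition \ref{nr} identifies this with $\nu_{K/K'}(\mathcal{N}_{K'/L}(\varepsilon_{K',S,T,V}))$. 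Comparing with the image of the right-hand side (using $\nu_{K/L}=\nu_{K/K'}\circ\nu_{K'/L}$ and the functoriality of reciprocity maps you correctly noted), one concludes by the \emph{injectivity} of $\nu_{K/K'}$ rather than by dividing by $|N|$. In short: the descent should be done through the injection $\nu_{K/K'}$, not through the norm on the exterior power.
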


\begin{proof}
Set $G=\Gal(K/k)$, $H=\Gal(K/L)$, $G'=\Gal(K'/k)$, and $H'=\Gal(K'/L)$. Let $\pi$ denote the restriction map $H \rightarrow H'$. 
The map 
$$(\bigcap_{G}^{r}\mathcal{O}_{K,S,T}^\times) \otimes_{\bbZ}\bbZ[H]/I(H)^{d+1} \rightarrow (\bigcap_{G}^{r}\mathcal{O}_{K,S,T}^\times) \otimes_{\bbZ}\bbZ[H']/I(H')^{d+1}$$
induced by $\pi$ is also denoted by $\pi$. For each $\sigma\in H'$, fix a lift $\widetilde \sigma \in H$. Then we compute
\begin{eqnarray}
\pi(\mathcal{N}_{K/L}(\varepsilon_{K,S,T,V}))&=&\sum_{\sigma \in H'}\widetilde \sigma(\N_{K/K'}\varepsilon_{K,S,T,V})\otimes \sigma^{-1} \nonumber \\
&=&\sum_{\sigma\in H'}\widetilde\sigma(\nu_{K/K'}(\N_{K/K'}^r(\varepsilon_{K,S,T,V}))) \otimes \sigma^{-1}\nonumber \\
&=&\sum_{\sigma\in H'}\widetilde\sigma(\nu_{K/K'}(\varepsilon_{K',S,T,V})) \otimes \sigma^{-1}\nonumber \\
&=&\nu_{K/K'}(\mathcal{N}_{K'/L}(\varepsilon_{K',S,T,V})), \nonumber
\end{eqnarray}
where the first equality follows from direct computation, the second from Lemma \ref{lemnu} (i), and the third from Proposition \ref{nr}. 
By the functoriality of reciprocity maps, we have
$$\pi((\bigwedge_{v\in V'\setminus V}\varphi_v^{K/L})(\varepsilon_{L,S,T,V'}))=(\bigwedge_{v\in V'\setminus V}\varphi_v^{K'/L})(\varepsilon_{L,S,T,V'}).$$
Hence, assuming Conjecture \ref{sanoconj} for $(K,L,V,V')$, we have 
$$\nu_{K/K'}(\mathcal{N}_{K'/L}(\varepsilon_{K',S,T,V}))=\sgn(V',V)\nu_{K/L}((\bigwedge_{v\in V'\setminus V}\varphi_v^{K'/L})(\varepsilon_{L,S,T,V'})).$$
Since $\nu_{K/L}=\nu_{K/K'}\circ \nu_{K'/L}$ by Lemma \ref{lemnu} (ii), the proposition follows from the injectivity of $\nu_{K/K'}$ and $\nu_{K'/L}$.
\end{proof}

\subsection{The statement of the main theorem} \label{mainresult}

\begin{theorem} \label{mainthm}
Assume:
\begin{itemize}
\item[(i)]{$S_\infty(k) \subset V$,}
\item[(ii)]{all $v \in S$ split completely in $L$,}
\item[(iii)]{$H = \prod_{v \in S \setminus V}J_v,$}
\end{itemize}
where $J_v \subset G$ is the inertia group at $v$. Then Conjecture \ref{sanoconj} is true. 
\end{theorem}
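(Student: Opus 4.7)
I would prove Theorem \ref{mainthm} by induction on $d:=|V'\setminus V|$, extending the induction scheme used by Darmon in \cite[\S 8]{D} (which established the analogous order-of-vanishing statement).

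\emph{Base case} ($d=0$). Here $V=V'$, and Conjecture \ref{sanoconj} reduces to the identity $\N_H\,\varepsilon_{K,S,T,V}=\nu_{K/L}(\varepsilon_{L,S,T,V})$ in $\bigcap_G^r\mathcal{O}_{K,S,T}^\times$. Assumption (ii) together with Remark \ref{RSknown}(iii) ensures that $\varepsilon_{L,S,T,V}$ exists unconditionally. Applying Proposition \ref{nr} to the tower $L\subset K$ with $S'=S$ yields $\N_{K/L}^r\varepsilon_{K,S,T,V}=\varepsilon_{L,S,T,V}$, and Lemma \ref{lemnu}(i) then supplies the desired equality.

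\emph{Inductive step.} Fix $v_0\in V'\setminus V$, and use assumption (iii) to decompose $H=H_0\times H_1$ with $H_0:=J_{v_0}$ and $H_1:=\prod_{v\in S\setminus V,\,v\neq v_0}J_v$. Set $K_1:=K^{H_0}$, so $\Gal(K_1/L)=H_1$ and $\Gal(K/K_1)=H_0$. The quadruple $(K_1,L,V,V'\setminus\{v_0\})$ lies in $\Upsilon(k,S,T)$ and satisfies (i)--(iii) with $H_1$ replacing $H$; since $|V'\setminus\{v_0\}\setminus V|=d-1$, the induction hypothesis applies to it. My strategy is to determine $\mathcal{N}_{K/L}(\varepsilon_{K,S,T,V})$ modulo $I(H)^{d+1}$ through two complementary pieces of information, exploiting the bigraded structure induced by $H=H_0\times H_1$: (a) its image under the projection $\bbZ[H]/I(H)^{d+1}\twoheadrightarrow\bbZ[H_1]/I(H_1)^{d}$, which by Proposition \ref{nr} applied to $K/K_1$ together with Lemma \ref{lemnu}(i) equals $\nu_{K/K_1}$ applied to $\mathcal{N}_{K_1/L}(\varepsilon_{K_1,S,T,V})$, and hence is pinned down by the induction hypothesis; and (b) its first-order derivative along $H_0$ (that is, its class modulo $I(H_0)^2$), which I would compute directly by invoking the local reciprocity law at $v_0$ to produce the missing wedge factor $\varphi_{v_0}^{K/L}$ applied to $\varepsilon_{L,S,T,V'}$. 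Assembling (a) and (b), tracking the sign as in Proposition \ref{ordr}, and using the injectivity of $\nu_{K/L}$ yields the claim modulo $I(H)^{d+1}$.

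The chief technical obstacle lies in step (b): producing a clean identification of the $H_0$-deformation of $\mathcal{N}_{K/K_1}(\varepsilon_{K,S,T,V})$ with the reciprocity contribution $\varphi_{v_0}^{K/L}(\varepsilon_{L,S,T,V'})$ requires a careful analysis of how Rubin--Stark elements behave under the Galois norm modulo higher augmentation ideals, in the spirit of the computations in \cite{sano}. Assumption (iii) plays a crucial role here by isolating the $v_0$-contribution from those of the other ramified primes via the direct-product structure (so that no cross terms obstruct the comparison), while assumption (ii) (which forces $v_0$ to split completely in $L$) guarantees that the local picture at a place of $L$ above $v_0$ is controlled by the prescribed direct summand $H_0$ of $H$.
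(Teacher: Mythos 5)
Your overall strategy (induction following Darmon's method, splitting $H$ via assumption (iii), using norm compatibility of Rubin--Stark elements for the unramified directions) is in the right spirit, but the inductive step as described has two genuine gaps.

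First, the reconstruction step fails: an element of $(\bigcap_G^r\mathcal{O}_{K,S,T}^\times)\otimes_\bbZ\bbZ[H]/I(H)^{d+1}$ with $H=H_0\times H_1$ is \emph{not} determined by (a) its image in $\bbZ[H_1]/I(H_1)^{d}$ together with (b) its class modulo $I(H_0)^2$. For example, a term of the form $u\otimes(\sigma_0-1)^2\tau$ with $\sigma_0\in J_{v_0}$ and $\tau$ of augmentation degree $d-2$ in $H_1$ is killed by the projection to $\bbZ[H_1]$ and is invisible modulo $I(H_0)^2$, yet need not vanish in $\bbZ[H]/I(H)^{d+1}$. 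This is precisely why the paper does not single out one place $v_0$: its Lemma \ref{lem4} shows that the only identity available is the full inclusion--exclusion $a=-\sum_{X\subsetneq W}(-1)^{|W\setminus X|}\pi_X(a)$ over \emph{all} $2^{|W|}-1$ proper subsets $X$ of $W=S\setminus V$, coming from $\prod_{v\in W}(\sigma_v-1)\in I(H)^{d+1}$. Accordingly the induction must run over all intermediate fields $K_X=K^{\prod_{v\notin X}J_v}$ (with the shrunken sets $S_X=V\cup X$, so that Euler factors $\prod_{v\in W\setminus X}(1-{\rm Fr}_v^{-1})$ and $\prod_{v\in W\setminus X}({\rm Fr}_v-1)$ can be matched on the two sides), not just over the single subfield $K_1=K^{J_{v_0}}$.

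Second, your step (b) --- identifying the ``first-order $H_0$-derivative'' of $\mathcal{N}_{K/L}(\varepsilon_{K,S,T,V})$ with the reciprocity contribution $\varphi_{v_0}^{K/L}(\varepsilon_{L,S,T,V'})$ at the \emph{ramified} place $v_0$ --- is not a technical detail to be filled in later: it is essentially the $d=1$ case of Conjecture \ref{sanoconj} itself, and no tool in the paper (or in \cite{sano}) computes reciprocity maps at ramified places. The whole point of Darmon's method as implemented here is to avoid any such computation: reciprocity maps are only ever evaluated at places $v$ that are unramified in the relevant subfield $K_X$, where $\varphi_v^{K_X/L}=\sum_{\sigma}\ord_w(\sigma(\cdot))\sigma^{-1}({\rm Fr}_v-1)$, and the ramified directions are handled purely by the norm relation of Proposition \ref{nr} together with the combinatorial identity of Lemma \ref{lem4}. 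As written, your argument assumes the hardest part of the statement rather than proving it.
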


\begin{example}
The assumptions in Theorem \ref{mainthm} are satisfied in the following case. Let $L$ be the Hilbert class field of $k$. Take principal prime ideals $\frak p_1,\ldots,\frak p_n$, and put $S:=S_\infty(k)\cup \{ \frak p_1,\ldots,\frak p_n\}$. Let $K$ be the composite field of the ray class fields modulo $\frak p_i^{e_i}$'s, where $e_i$ is a positive integer. If we set $V:=S_\infty(k)$, then the assumptions (i)-(iii) are satisfied.
\end{example}

\begin{remark}
To prove Theorem \ref{mainthm}, we do not need to assume that the Rubin-Stark conjecture holds for {\it every} $(K,S,T,V)\in \Omega(k)$. More precisely, Remark \ref{RSknown} (iii) and the proof which we will describe in the next section show that we only need to assume that the Rubin-Stark conjecture holds for $(K_X,S_X,T,V)$ for every nonempty subset $X \subset S\setminus V$, where $K_X$ is the unique intermediate field of $K/L$ such that $\Gal(K_X/L)=\prod_{v\in X}J_v$, and $S_X=V\cup X$.
\end{remark}

By Proposition \ref{functorial}, we have the following corollary. 

\begin{corollary}
Assume the assumptions of Theorem \ref{mainthm} hold for $(K,L,V,V')\in\Upsilon(k,S,T)$. Then Conjecture \ref{sanoconj} is true for $(K',L,V,V')\in \Upsilon(k,S,T)$ such that $K'\subset K$.
\end{corollary}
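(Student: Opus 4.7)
The corollary should follow essentially for free from combining the two preceding results, so my plan is just to verify that the hypotheses line up. First I would apply Theorem \ref{mainthm} to the quadruple $(K,L,V,V')$: since assumptions (i)--(iii) of the theorem are postulated to hold for this quadruple, Conjecture \ref{sanoconj} is established for $(K,L,V,V')$. Next I would invoke Proposition \ref{functorial}, which allows us to propagate the validity of Conjecture \ref{sanoconj} from $(K,L,V,V')$ down to any intermediate quadruple $(K',L,V,V')$ with $L\subset K'\subset K$.

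The only genuine thing to check is that $(K',L,V,V')$ actually lies in $\Upsilon(k,S,T)$, i.e.\ that $(K',S,T,V)\in\Omega(k)$ and $(L,S,T,V')\in\Omega(k)$, with $L\subset K'$ and $V\subset V'$. The second membership is already given. For the first, admissibility of $(S,T)$ for $K'/k$ reduces to three points: $S$ contains $S_\infty(k)$ and all places ramifying in $K'$ (automatic since any place ramifying in $K'$ also ramifies in $K$, and $(S,T)$ is admissible for $K/k$); $S\cap T=\emptyset$ (inherited verbatim); and $\mathcal{O}_{K',S,T}^\times$ is torsion-free (inherited from $\mathcal{O}_{K,S,T}^\times$ via the inclusion $\mathcal{O}_{K',S,T}^\times\subset\mathcal{O}_{K,S,T}^\times$). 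The condition that every $v\in V$ splits completely in $K'$ is inherited from its splitting completely in $K$. Thus $(K',L,V,V')\in\Upsilon(k,S,T)$, and Proposition \ref{functorial} applies.

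There is no real obstacle; the content of the corollary is entirely absorbed by Theorem \ref{mainthm} together with the functorial descent in Proposition \ref{functorial}. Consequently the proof reduces to these two citations plus the routine verification of admissibility and splitting in the subfield $K'$, which I would dispatch in a single sentence.
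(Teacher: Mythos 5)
Your proposal is correct and is exactly the paper's argument: the corollary is stated as an immediate consequence of Theorem \ref{mainthm} applied to $(K,L,V,V')$ together with the descent in Proposition \ref{functorial}. Your verification that $(K',L,V,V')\in\Upsilon(k,S,T)$ is harmless but unnecessary, since that membership is already part of the corollary's hypothesis.
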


\section{Proof} \label{secpr}
In this section, we give a proof of Theorem \ref{mainthm}. 


We assume that the assumptions (i)-(iii) in Theorem \ref{mainthm} are satisfied. By the assumption (ii), note that Theorem \ref{mainthm} is reduced to the case that $r'=|S|-1$, by \cite[Proposition 3.12]{sano}. Henceforth we assume that $V'=S\setminus \{v_0\}$ with some $v_0\in S \setminus V$. 

\begin{lemma} \label{lem1}
For any $v_0' \in S\setminus V$, we have 
$$\sgn(V',V)(\bigwedge_{v\in V'\setminus V}\varphi_v)(\varepsilon_{L,S,T,V'})=\sgn(V'',V)(\bigwedge_{v\in V''\setminus V}\varphi_v)(\varepsilon_{L,S,T,V''}),$$
where $V''=S\setminus \{v_0'\}$.
\end{lemma}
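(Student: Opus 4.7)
The plan is to exploit assumption (ii) to make the Stickelberger element explicit, then use injectivity of $\lambda_{L,S}$ to reduce the lemma to a wedge-analogue of global reciprocity.

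Since every $v\in S$ splits completely in $L$, each nontrivial character $\chi$ of $G/H=\Gal(L/k)$ satisfies $r_\chi=|S|$, so only the trivial character contributes at order $|S|-1$:
$$\theta^{(|S|-1)}_{L/k,S,T}=c\cdot e_1=\frac{c}{|G/H|}\N_{G/H}$$
for some real constant $c$ depending only on $L,S,T$ (not on $v_0$). A direct expansion of $\bigwedge_{v\in V'}(w-w_0)$ in $\bigwedge_\bbZ^{|S|-1}X_{L,S}$ yields
$$\bigwedge_{v\in V'}(w-w_0)=(-1)^{m(v_0)}\Omega,$$
where $\Omega=\sum_{v\in S}(-1)^{m(v)}\bigwedge_{v'\in S,\,v'\neq v}w'$ is a symmetric alternating sum independent of $v_0$, and $m(v)$ denotes the position of $v$ in the fixed total order on $S$. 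By injectivity of $\lambda_{L,S}$ on $\bigcap_{G/H}^{|S|-1}\mathcal{O}_{L,S,T}^\times$ (a consequence of torsion-freeness of $\mathcal{O}_{L,S,T}^\times$), this yields the explicit sign relation
$$\varepsilon_{L,S,T,V''}=(-1)^{m(v_0')-m(v_0)}\varepsilon_{L,S,T,V'}.$$

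A short combinatorial computation of $\sgn(V',V)$ and $\sgn(V'',V)$, combined with this sign relation and the factorizations $\bigwedge_{v\in V'\setminus V}\varphi_v=\pm\Psi\wedge\varphi_{v_0'}$, $\bigwedge_{v\in V''\setminus V}\varphi_v=\pm\Psi\wedge\varphi_{v_0}$ through the common sub-wedge $\Psi=\bigwedge_{v\in(S\setminus V)\setminus\{v_0,v_0'\}}\varphi_v$, reduces the desired equality (after cancellation) to the single identity
$$\Bigl(\Psi\wedge\sum_{v\in S\setminus V}\varphi_v\Bigr)(\varepsilon_{L,S,T,V'})=0\quad\text{in}\quad \bigcap_{G/H}^r\mathcal{O}_{L,S,T}^\times\otimes_\bbZ Q(H)^d.$$
Here the terms in the sum with $v$ already appearing as a factor of $\Psi$ vanish by repetition of the wedge factor, and $\varphi_v=0$ for $v\notin S\setminus V$ since $\rec_{w_v}$ is then trivial on $\mathcal{O}_{L,S,T}^\times$.

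The main obstacle is this final identity, which is the wedge-analogue of the global reciprocity law $\prod_{w'}\rec_{w'}(a)=1$ in $H$. Rationally, reciprocity gives $\sum_{v\in S}\sum_{\tau\in G/H}(\rec_{w_v}(\tau a)-1)=0$ in $I(H)/I(H)^2$ for every $a\in\mathcal{O}_{L,S,T}^\times$, which forces the image of $\sum_{v\in S\setminus V}\varphi_v$ into the ``$G/H$-augmentation zero'' subgroup $I(G/H)\otimes Q(H)^1\subset Q_H^1$; combined with the $G/H$-invariance of $\varepsilon_{L,S,T,V'}$ (inherited from the $\N_{G/H}$-multiple form of $\lambda_{L,S}(\varepsilon_{L,S,T,V'})$ derived above), this yields the vanishing after tensoring with $\bbQ$. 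Lifting this rational vanishing to the integral identity requires tracking the $\N_{G/H}$-equivariance through Sano's construction of the algebraic regulator map in \cite[Proposition 2.7]{sano}, which is the delicate step.
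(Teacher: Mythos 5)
Your overall strategy (make the two Rubin--Stark elements explicitly proportional, factor both regulators through the common sub-wedge $\Psi$, and reduce to the vanishing of $\Psi\wedge\sum_{v\in S\setminus V}\varphi_v$ on $\varepsilon_{L,S,T,V'}$) is the same as the paper's, and the first two steps are fine. The problem is the last step, and you have correctly identified it yourself: what you actually prove there is a statement ``after tensoring with $\bbQ$,'' but the target module $(\bigcap_{G/H}^{r}\mathcal{O}_{L,S,T}^\times)\otimes_\bbZ Q(H)^{d}$ is a \emph{torsion} group whenever $d\geq 1$ (and the lemma is trivial when $d=0$, i.e.\ $v_0'=v_0$), since $Q(H)^d=I(H)^d/I(H)^{d+1}$ is finite. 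So the rational vanishing is vacuous, and the ``delicate step'' you defer --- lifting to an integral identity --- is exactly the content of the lemma. Moreover, the route you propose for that lifting is shaky: global reciprocity for $a\in\mathcal{O}_{L,S,T}^\times$ only puts $\sum_v\varphi_v(a)$ in the augmentation-zero part of $Q_H^1$, and to kill it against $\varepsilon_{L,S,T,V'}$ via $\N_{G/H}\cdot I(G/H)=0$ you would need $\varepsilon_{L,S,T,V'}$ to be $\N_{G/H}$ times an \emph{integral} element, which is not what the explicit formula $\varepsilon=\tfrac{|A_{k,S,T}|}{|G/H|^{r'}}u_1\wedge\cdots\wedge u_{r'}$ gives you.

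The paper closes this gap by using a stronger input at the crucial point. Because every $v\in S$ splits completely in $L$, the element $\varepsilon_{L,S,T,V'}$ lies in $e_1(\bbQ\otimes_\bbZ\bigwedge^{r'}_{G/H}\mathcal{O}_{L,S,T}^\times)=\bbQ\otimes_\bbZ\bigwedge^{r'}_\bbZ\mathcal{O}_{k,S,T}^\times$, i.e.\ it is (up to rational scalar) a wedge of units of the \emph{base field} $k$. On $\mathcal{O}_{k,S,T}^\times$ each $\varphi_v$ collapses to $a\mapsto(\rec_w(a)-1)\N_{G/H}$, and the product formula then gives the exact identity $\sum_{v\in S\setminus V}\varphi_v^{K/L}=0$ as a map $\mathcal{O}_{k,S,T}^\times\to Q_H^1$ --- not merely a containment in the augmentation-zero part. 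With this on-the-nose relation the substitution $\varphi_{v_0'}=-\varphi_{v_0}-\sum_{v\in W\setminus\{v_0,v_0'\}}\varphi_v$ is legitimate inside the wedge (the extra terms die by repetition of a factor of $\Psi$), and the identity holds integrally. So you should replace your reciprocity step for $L$-units by the descent to $k$-units; that is the one missing idea.
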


\begin{proof}
By the product formula of reciprocity maps, we see that 
\begin{eqnarray}
\sum_{v\in S\setminus V}\varphi_v^{K/L}=0 \quad \text{on} \quad \mathcal{O}_{k,S,T}^\times. \label{eq2}
\end{eqnarray}
Since all $v\in S$ split completely in $L$, we see that $\varepsilon_{L,S,T,V'},\varepsilon_{L,S,T,V''} \in e_{1}(\bbQ \otimes_\bbZ \bigwedge_{G/H}^{r'}\mathcal{O}_{L,S,T}^\times)=\bbQ\otimes_\bbZ \bigwedge_\bbZ^{r'} \mathcal{O}_{k,S,T}^\times$. We also see that $\varepsilon_{L,S,T,V'}=\pm \varepsilon_{L,S,T,V''}$ by the characterization of Rubin-Stark elements. Hence, by (\ref{eq2}), we have 
$$(\bigwedge_{v\in V'\setminus V}\varphi_v)(\varepsilon_{L,S,T,V'})=\pm(\bigwedge_{v\in V''\setminus V}\varphi_v)(\varepsilon_{L,S,T,V''}).$$
The lemma follows from explicit computation of sign.
\end{proof}

\begin{remark}
The proof of \cite[Proposition 3.1]{R2} shows that the Rubin-Stark element $\varepsilon_{L,S,T,V'}$ is described explicitly as follows:
$$\varepsilon_{L,S,T,V'}= \frac{|A_{k,S,T}|}{|G/H|^{r'}}u_1\wedge \cdots \wedge u_{r'},$$
where $A_{k,S,T}$ is the ``$S$-ray class group modulo $T$" (see \cite[\S 1.1]{R2}), and $\{u_i\}$ is a basis of $\mathcal{O}_{k,S,T}^\times$ such that 
$$(\bigwedge_{v\in S\setminus \{v_0\}}(-\log|\cdot|_v))(u_1\wedge\cdots\wedge u_{r'})<0.$$
Lemma \ref{lem1} can also be proved by using this description. 
\end{remark}

We set some notations. Put $W:=S \setminus V$. For each subset $X\subset W$, define 
$$H_X:=\prod_{v\in X}J_v.$$
$H_X$ is regarded as a quotient of $H$, and also a subgroup of $H$. Let $K_X$ denote the unique intermediate field of $K/L$ such that $\Gal(K_X/L)=H_X$. Put $G_X:=\Gal(K_X/k)$ and $S_X:=V\cup X$. Note that $G_W=G$, $H_W=H$, $K_W=K$, and $S_W=S$. Define a map 
$$\pi_X : H \rightarrow H_X \hookrightarrow H,$$
where the first arrow is the natural projection, and the second is the natural inclusion. The endomorphism of $\bbZ[H]$ induced by $\pi_X$ is also denoted by $\pi_X$. If $X \neq \emptyset$, choose $v_0' \in X$, then we easily see that $(K_X,L,V,V_X') \in \Upsilon(k,S_X,T)$, where $V_X':=S_X\setminus \{v_0'\}$. We define 
$$L_X:=\mathcal{N}_{K_X/L}(\varepsilon_{K_X,S_X,T,V}) \in (\bigcap_{G_X}^r \mathcal{O}_{K_X,S_X,T}^\times )\otimes_\bbZ \bbZ[H_X],$$
$$R_X:=\sgn(V_X',V)(\bigwedge_{v \in V_X'\setminus V}\varphi_v)(\varepsilon_{L,S_X,T,V_X'}) \in (\bigcap_{G/H}^r \mathcal{O}_{L,S_X,T}^\times) \otimes_\bbZ Q(H_X)^{|X|-1}.$$
Note that, by Lemma \ref{lem1}, $R_X$ does not depend on the choice of $v_0' \in X$. 

In the next lemma, the endomorphisms of $(\bigcap_{G}^r \mathcal{O}_{K,S,T}^\times) \otimes_\bbZ \bbZ[H]/I(H)^{d+1}$ and $(\bigcap_{G/H}^r \mathcal{O}_{L,S,T}^\times) \otimes_\bbZ Q(H)^d$ induced by $\pi_X$ are also denoted by $\pi_X$.

\begin{lemma} \label{lem2}
Let $X \subset W$ be a nonempty subset. Then:
\begin{itemize}
\item[(i)]{$$\pi_X(L_W)=\nu_{K/K_X}(L_X) \cdot(1\otimes \prod_{v \in W\setminus X}(1-{\rm Fr}_v^{-1})) \ \ \text{in} \ \ (\bigcap_{G}^r \mathcal{O}_{K,S,T}^\times) \otimes_\bbZ \bbZ[H]/I(H)^{d+1},$$}
\item[(ii)]{$$\pi_X(R_W)=R_X \cdot(1\otimes \prod_{v \in W\setminus X}({\rm Fr}_v-1) )\quad \text{in} \quad (\bigcap_{G/H}^r \mathcal{O}_{L,S,T}^\times) \otimes_\bbZ Q(H)^d.$$}
\end{itemize}
(Here ${\rm Fr}_v$ is considered to be in $H_X$, hence in $H$.)
\end{lemma}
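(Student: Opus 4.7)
The plan is to unpack both sides directly from the definitions and transport everything through the key technical inputs: Lemma~\ref{lemnu}(i) and Proposition~\ref{nr} for (i); Proposition~\ref{ordr} together with a linearization of $\varphi_v^{K_X/L}$ at unramified $v$ for (ii). By Lemma~\ref{lem1} I may assume $v_0 \in X$, so that
\[
V' \setminus V \;=\; (V_X' \setminus V) \sqcup (W \setminus X).
\]
Throughout I use that assumption (ii) forces every $v \in W$ to split completely in $L$ and, combined with assumption (iii), this places the Frobenius $\mathrm{Fr}_v$ at any $v \in W\setminus X$ in $H_X$: its decomposition group in $G$ lies in $H$, its inertia $J_v$ lies in $H_{W\setminus X}$, so modulo inertia the Frobenius falls into $H/H_{W\setminus X}=H_X$.

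For part (i), the internal direct product $H = H_X \times H_{W\setminus X}$ allows me to split the sum defining $\mathcal{N}_{K/L}$ and obtain
\[
\pi_X(L_W) \;=\; \sum_{\sigma \in H_X} \sigma \cdot \N_{K/K_X}(\varepsilon_{K,S,T,V}) \otimes \sigma^{-1}.
\]
Lemma~\ref{lemnu}(i) rewrites the inner factor as $\nu_{K/K_X}(\N_{K/K_X}^r(\varepsilon_{K,S,T,V}))$, Proposition~\ref{nr} evaluates this to $\nu_{K/K_X}\bigl(\prod_{v \in W\setminus X}(1-\mathrm{Fr}_v^{-1})\,\varepsilon_{K_X,S_X,T,V}\bigr)$, and the $G$-equivariance of $\nu_{K/K_X}$ pulls the scalar $\prod(1-\mathrm{Fr}_v^{-1})\in\bbZ[H_X]$ out. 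To transfer it across the tensor product I use the identity
\[
\sum_{\sigma \in H_X}\sigma h x \otimes \sigma^{-1} \;=\; \sum_{\sigma \in H_X}\sigma x \otimes \sigma^{-1} h \qquad (h \in H_X),
\]
which follows from the substitution $\sigma \mapsto \sigma h^{-1}$ together with the commutativity of $H$. Iterating this with each $h=\mathrm{Fr}_v^{-1}$ produces $\nu_{K/K_X}(L_X)\cdot(1 \otimes \prod_{v \in W\setminus X}(1-\mathrm{Fr}_v^{-1}))$, as desired.

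For part (ii), functoriality of local reciprocity gives $\pi_X\circ\varphi_v^{K/L}=\varphi_v^{K_X/L}$ (viewed via $Q_{H_X}^1\hookrightarrow Q_H^1$), so $\pi_X(R_W) = \sgn(V',V)\,(\bigwedge_{v \in V' \setminus V}\varphi_v^{K_X/L})(\varepsilon_{L,S,T,V'})$. I decompose the wedge along $(V_X'\setminus V)\sqcup(W\setminus X)$ so that, up to a reordering sign, the inner wedge over $W\setminus X$ is applied first. For $v\in W\setminus X$ the prime is unramified in $K_X$, whence $\rec_w^{K_X/L}(b)=\mathrm{Fr}_v^{\ord_w(b)}$ and the elementary identity $\mathrm{Fr}_v^n-1\equiv n(\mathrm{Fr}_v-1)\pmod{I(H_X)^2}$ yields
\[
\varphi_v^{K_X/L}(a) \;\equiv\; (\mathrm{Fr}_v - 1)\,\tilde\varphi_v(a) \pmod{I(H_X)^2\bbZ[G]}, \qquad \tilde\varphi_v(a):=\sum_{\sigma\in G/H}\ord_w(\sigma a)\sigma^{-1}.
\]
Substituting into the inner wedge and observing that any $I(H_X)^2$-error, after multiplication by the outer wedge of degree $|X|-1$, produces an element of $I(H)^{d+1}$ that vanishes in $Q(H)^d$, I factor out $\prod(\mathrm{Fr}_v-1)$ and apply Proposition~\ref{ordr} to the pair $(L,S_X,V_X')\subset(L,S,V')$ (whose difference is $W\setminus X$) to identify the remaining $\tilde\varphi$-wedge applied to $\varepsilon_{L,S,T,V'}$ with $\sgn(V',V_X')\,\varepsilon_{L,S_X,T,V_X'}$. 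Collecting the three signs via $\sgn(V',V)\cdot\sgn(V',V_X')\cdot(\text{wedge reordering sign})=\sgn(V_X',V)$ delivers $R_X\cdot(1\otimes\prod_{v\in W\setminus X}(\mathrm{Fr}_v-1))$. The main obstacles I anticipate are the sign bookkeeping and the degree accounting: the equality $|V_X'\setminus V|+|W\setminus X|=d$ is precisely what ensures the linearization errors contribute only to $I(H)^{d+1}$ and thus vanish in $Q(H)^d$; all other steps are a routine translation through the definitions.
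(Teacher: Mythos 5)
Your proposal is correct and follows essentially the same route as the paper: part (i) via Lemma \ref{lemnu}(i) and Proposition \ref{nr} followed by transporting the factor $\prod(1-{\rm Fr}_v^{-1})$ across the tensor product, and part (ii) via the unramified linearization of $\varphi_v^{K_X/L}$, Proposition \ref{ordr}, and sign bookkeeping. You merely spell out two steps the paper leaves as ``direct computation'' (the substitution identity in the $H_X$-sum and the degree count showing the $I(H_X)^2$-errors die in $Q(H)^d$), both of which are right.
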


\begin{proof}
For each $\sigma \in H_X$, fix a lift $\widetilde\sigma\in H$. 
We compute 
\begin{eqnarray}
\pi_X(L_W)
&=& \sum_{\sigma \in H_X}\widetilde \sigma(\N_{K/K_X}\varepsilon_{K,S,T,V}) \otimes \sigma^{-1}\nonumber \\
&=& \sum_{\sigma\in H_X}\widetilde \sigma(\nu_{K/K_X}(\N_{K/K_X}^r(\varepsilon_{K,S,T,V})))\otimes\sigma^{-1} \nonumber \\
&=& \nu_{K/K_X}(\mathcal{N}_{K_X/L}((\prod_{v \in W\setminus X}(1-{\rm Fr}_v^{-1}))\varepsilon_{K_X,S_X,T,V})) \nonumber \\
&=& \nu_{K/K_X}(\mathcal{N}_{K_X/L}(\varepsilon_{K_X,S_X,T,V} )) \cdot(1\otimes \prod_{v\in W \setminus X}(1-{\rm Fr}_v^{-1}) )\nonumber \\
&=& \nu_{K/K_X}(L_X) \cdot(1\otimes \prod_{v \in W\setminus X}(1-{\rm Fr}_v^{-1})), \nonumber
\end{eqnarray}
where the second equality follows from Lemma \ref{lemnu} (i), the third from Proposition \ref{nr}, and the fourth from direct computation. This shows (i). 

Next, we compute $\pi_X(R_W)$. By Lemma \ref{lem1}, we may assume $v_0 \in X$ and $V_X'=S_X\setminus \{ v_0 \}$. Note that, for $v\in W\setminus X$, we have 
\begin{eqnarray}
\varphi_v^{K_X/L}=\sum_{\sigma \in G/H}\ord_w(\sigma(\cdot))\sigma^{-1}({\rm Fr}_v-1), \label{eq}
\end{eqnarray}
since $v$ is unramified in $K_X$ (see \cite[Proposition 13, Chpt. XIII]{Se}). We compute 
\begin{eqnarray}
\pi_X(R_W) &=& \sgn(V',V)(\bigwedge_{v\in W\setminus \{ v_0\}}\varphi_v^{K_X/L})(\varepsilon_{L,S,T,V'}) \nonumber \\
&=& \sgn(V',V)\sgn(W\setminus \{v_0\},V_X'\setminus V)(\bigwedge_{v\in V_X'\setminus V }\varphi_v^{K_X/L}) \nonumber \\
&&\circ(\bigwedge_{v\in W\setminus X}(\sum_{\sigma \in G/H}\ord_w(\sigma(\cdot))\sigma^{-1}))(\varepsilon_{L,S,T,V'})\cdot(1\otimes \prod_{v \in W\setminus X}({\rm Fr}_v-1)) \nonumber \\
&=&\sgn(V',V)\sgn(W\setminus \{v_0\},V_X'\setminus V)\sgn(V',V_X') \nonumber \\
&& \times (\bigwedge_{v\in V_X'\setminus V}\varphi_v^{K_X/L})(\varepsilon_{L,S_X,T,V_X'}) \cdot(1\otimes\prod_{v \in W\setminus X}({\rm Fr}_v-1) )\nonumber \\
&=& \sgn(V_X',V)(\bigwedge_{v \in V_X'\setminus V}\varphi_v^{K_X/L})(\varepsilon_{L,S_X,T,V_X'})\cdot(1\otimes \prod_{v \in W\setminus X}({\rm Fr}_v-1)) \nonumber \\
&=& R_X \cdot(1\otimes \prod_{v \in W\setminus X}({\rm Fr}_v-1)), \nonumber 
\end{eqnarray}
where the second equality follows from (\ref{eq}), the third equality from Proposition \ref{ordr}, and the fourth from sign computation. This shows (ii).
\end{proof}

\begin{lemma} \label{lem3}
$$\pi_\emptyset(L_W)=\pi_\emptyset(\nu_{K/L}(R_W)).$$
\end{lemma}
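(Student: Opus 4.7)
The plan is to exploit the fact that $\pi_\emptyset$ acts on the coefficient ring $\bbZ[H]/I(H)^{d+1}$ as augmentation followed by inclusion of $\bbZ$. This map annihilates $I(H)^d$ whenever $d\geq 1$, and is the identity on the constants, so both sides will collapse dramatically.

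First I would compute the left-hand side directly from the definition of $\mathcal{N}_{K/L}$. Since $\pi_\emptyset(\sigma^{-1})=1$ for every $\sigma\in H$,
\[
\pi_\emptyset(L_W)=\Bigl(\sum_{\sigma\in H}\sigma\,\varepsilon_{K,S,T,V}\Bigr)\otimes 1=(\N_H\varepsilon_{K,S,T,V})\otimes 1.
\]
By Lemma \ref{lemnu}(i), $\N_H\varepsilon_{K,S,T,V}=\nu_{K/L}(\N_{K/L}^{r}(\varepsilon_{K,S,T,V}))$. Applying Proposition \ref{nr} to the pair $L\subset K$ with the same place set $S$ (so the Euler factor product over $S'\setminus S=\emptyset$ is trivial), we get $\N_{K/L}^{r}(\varepsilon_{K,S,T,V})=\varepsilon_{L,S,T,V}$. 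Therefore $\pi_\emptyset(L_W)=\nu_{K/L}(\varepsilon_{L,S,T,V})\otimes 1$.

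Next I would split on $d=r'-r=|W|-1$. If $d=0$, then $V'=V$, so by definition $R_W=\varepsilon_{L,S,T,V}$ and $\pi_\emptyset$ is the identity on $\bbZ[H]/I(H)\cong\bbZ$, giving the equality at once. If $d\geq 1$, I claim both sides vanish. The right-hand side is zero because $\nu_{K/L}(R_W)$ lies in $(\bigcap_G^r\mathcal{O}_{K,S,T}^\times)\otimes_\bbZ Q(H)^d$, and augmentation annihilates $I(H)^d$ for $d\geq 1$. For the left-hand side, by injectivity of $\nu_{K/L}$ it suffices to show $\varepsilon_{L,S,T,V}=0$, and by injectivity of $\lambda_{L,S}$ this reduces further to showing $\theta^{(r)}_{L/k,S,T}=0$. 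Since every $v\in S$ splits completely in $L$ by hypothesis (ii), the decomposition group of each $v\in S$ in $\Gal(L/k)$ is trivial; hence $r_\chi=|S|$ for every nontrivial character of $\Gal(L/k)$ and $r_1=|S|-1$. But $d\geq 1$ means $r<r'=|S|-1$, so no character of $\Gal(L/k)$ satisfies $r_\chi=r$, and the Stickelberger element vanishes as required.

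The only substantive point is the character-theoretic argument that $\theta^{(r)}_{L/k,S,T}=0$ when $d\geq 1$, which is exactly where the splitting hypothesis (ii) enters; the remainder of the proof is an assembly of Lemma \ref{lemnu}(i), Proposition \ref{nr}, and the injectivity of $\nu_{K/L}$ and $\lambda_{L,S}$.
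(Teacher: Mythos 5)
Your proof is correct and follows essentially the route the paper intends: the paper's one-line proof cites Lemma \ref{lemnu}(i) and Proposition \ref{nr}, which give exactly your computation $\pi_\emptyset(L_W)=\nu_{K/L}(\varepsilon_{L,S,T,V})\otimes 1$, after which both sides vanish when $d\geq 1$. The only (immaterial) divergence is that you obtain $\varepsilon_{L,S,T,V}=0$ by counting orders of vanishing of $L$-functions, whereas the citation of Proposition \ref{nr} suggests deducing it from the vanishing Euler factors $1-{\rm Fr}_v^{-1}=0$ at the completely split places $v\in W\setminus\{v_0\}$; both arguments rest on hypothesis (ii) and are equivalent in substance.
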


\begin{proof}
This follows from Proposition \ref{nr} and Lemma \ref{lemnu} (i). 
\end{proof}

The following algebraic lemma is due to Darmon's method (\cite[\S 8]{D}).

\begin{lemma} \label{lem4}
Let $a \in \bigcap_{G}^r \mathcal{O}_{K,S,T}^\times \otimes_\bbZ \bbZ[H]/I(H)^{d+1}$. Then we have 
$$a=-\sum_{X \subset W,X\neq W}(-1)^{|W\setminus X|}\pi_X(a).$$
\end{lemma}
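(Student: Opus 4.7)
The plan is to reformulate the claim as the vanishing, modulo $I(H)^{d+1}$, of a combinatorial sum of commuting idempotent endomorphisms of $\bbZ[H]$, and then evaluate that sum using the product decomposition $H=\prod_{v\in W}J_v$ from hypothesis~(iii). Since we have reduced to $V'=S\setminus\{v_0\}$, the exponent satisfies $d+1=r'-r+1=|S|-|V|=|W|$. Moreover $\pi_X$ acts only on the right-hand tensor factor $\bbZ[H]/I(H)^{d+1}$, so proving the stated equality for every $a$ reduces to the endomorphism identity
$$\sum_{X\subset W}(-1)^{|W\setminus X|}\pi_X \;\equiv\; 0 \pmod{I(H)^{|W|}}$$
on $\bbZ[H]$; separating off the $X=W$ term, for which $\pi_W=\mathrm{id}$, recovers the form in which the lemma is stated.

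Next, for each $v\in W$ I would set $e_v:=\pi_{W\setminus\{v\}}$. Under the decomposition $H=\prod_{v\in W}J_v$, the map $e_v$ is the projector of $\bbZ[H]$ corresponding to $H\twoheadrightarrow H/J_v\hookrightarrow H$, i.e.\ the one ``killing the $J_v$-component''. These projectors are pairwise commuting idempotents, and one checks at once that $\pi_X=\prod_{v\notin X}e_v$ for every $X\subset W$. Reindexing by $Y:=W\setminus X$ collapses the alternating sum to a single product:
$$\sum_{X\subset W}(-1)^{|W\setminus X|}\pi_X \;=\; \sum_{Y\subset W}(-1)^{|Y|}\prod_{v\in Y}e_v \;=\; \prod_{v\in W}(1-e_v).$$

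It then remains to evaluate this product on a group element $h=\prod_{v\in W}h_v\in H$ (with $h_v\in J_v$). A short induction on $|W|$, using the identity $(1-e_v)(h)=(h_v-1)\prod_{v'\neq v}h_{v'}$, gives
$$\Bigl(\prod_{v\in W}(1-e_v)\Bigr)(h) \;=\; \prod_{v\in W}(h_v-1),$$
which lies in $\prod_{v\in W}I(J_v)\subset I(H)^{|W|}=I(H)^{d+1}$, establishing the desired vanishing. There is essentially no obstacle here: once the right decomposition of $\pi_X$ into commuting idempotents is recognised, the lemma reduces to the purely combinatorial identity $\prod_v(1-e_v)\equiv 0\pmod{I(H)^{|W|}}$, which is the abstract skeleton of Darmon's inductive trick in \cite[\S 8]{D}; it extends from $\bbZ[H]$ to the full tensor product at no extra cost because $\pi_X$ leaves the $\bigcap$-factor untouched.
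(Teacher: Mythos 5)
Your proposal is correct and is essentially the paper's argument: the paper likewise reduces to the operator identity $\sum_{X\subset W}(-1)^{|W\setminus X|}\pi_X\equiv 0$ on $\bbZ[H]/I(H)^{d+1}$ and verifies it on a group element $\sigma=\prod_{v\in W}\sigma_v$ by observing that the alternating sum equals $\prod_{v\in W}(\sigma_v-1)\in I(H)^{|W|}=I(H)^{d+1}$, which is exactly your $\prod_{v\in W}(1-e_v)$ computation written out. Your packaging via commuting idempotents is just a more formal presentation of the same calculation.
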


\begin{proof}
Take $\sigma \in H$, and write $\sigma = \prod_{v\in W}\sigma_v$ with $\sigma_v \in J_v$. Then we have 
$$\sum_{X \subset W}(-1)^{|W\setminus X|}\pi_X(\sigma)=\prod_{v \in W}(\sigma_v-1) \in I(H)^{|W|}=I(H)^{d+1}.$$
From this, we see that 
$$\sum_{X\subset W}(-1)^{|W\setminus X|}\pi_X=0 \quad \text{on} \quad (\bigcap_{G}^r \mathcal{O}_{K,S,T}^\times) \otimes_\bbZ \bbZ[H]/I(H)^{d+1}.$$
Since $\pi_W=\id$, the lemma follows.
\end{proof}

\begin{proof}[Proof of Theorem \ref{mainthm}]
We prove that the equality $L_W=\nu_{K/L}(R_W) $ holds in $(\bigcap_G^r\mathcal{O}_{K,S,T}^\times)\otimes_\bbZ \bbZ[H]/I(H)^{d+1}$ by induction on $|W|$. When $|W|=1$, this follows from Proposition \ref{nr} and Lemma \ref{lemnu} (i). We assume $L_X=\nu_{K_X/L}(R_X)$ for all proper nonempty subsets $X \subset W$. By Lemma \ref{lem4}, it is sufficient to prove that $\pi_X(L_W)=\pi_X(\nu_{K/L}(R_W))$ for each proper subset $X \subset W$. If $X=\emptyset$, then this follows from Lemma \ref{lem3}. Suppose $X \neq \emptyset$. Note that, by the inductive hypothesis, we have $\nu_{K/K_X}(L_X)=\nu_{K/L}(R_X) \in (\bigcap_G^r\mathcal{O}_{K,S,T}^\times )\otimes_\bbZ Q(H)^{|X|-1}$. Hence, by Lemma \ref{lem2} (i), we have 
$$\pi_X(L_W)=\nu_{K/K_X}(L_X) \cdot(1\otimes \prod_{v \in W\setminus X}({\rm Fr}_v-1))=\nu_{K/L}(R_X)\cdot(1\otimes\prod_{v\in W \setminus X}({\rm Fr}_v-1)).$$
On the other hand, we know by Lemma \ref{lem2} (ii) that 
$$\pi_X(R_W)=R_X \cdot(1\otimes\prod_{v \in W\setminus X}({\rm Fr}_v-1)),$$
so we have $\pi_X(L_W)=\pi_X(\nu_{K/L}(R_W))$.
\end{proof}

\section*{Acknowledgement}
The author would like to thank Professor Masato Kurihara for his constant encouragement, and helpful advice. He also wishes to thank Professor David Burns for stimulating discussions on the subject of this paper, and providing him a good atmosphere when he visited King's College London.

\end{document}